\theoremstyle{plain}
\newtheorem*{thrm}{Theorem}
\newtheorem{thm}{Theorem}[section]
\newtheorem{prop}[thm]{Proposition}
\newtheorem{cor}[thm]{Corollary}
\newtheorem{lemma}[thm]{Lemma}
\theoremstyle{definition}
\newtheorem*{defnn}{Definition}
\newtheorem{defn}[thm]{Definition}
\theoremstyle{remark}
\newtheorem{remark}[thm]{Aside}
\newtheorem{eg}[thm]{Example}
\newtheorem{egs}[thm]{Examples}
\newcommand{\proofof}[1]{\end{#1}\begin{proof}}
\renewcommand\section{\@startsection {section}{1}{\z@}%
  {-3.5ex \@plus -1ex \@minus -.2ex}{2.3ex \@plus.2ex}%
  {\normalfont\large\bfseries}}
\renewcommand\subsection{\@startsection{subsection}{2}{\z@}%
  {-3.25ex\@plus -1ex \@minus -.2ex}{1.5ex \@plus .2ex}%
  {\normalfont\bfseries}}
\newcommand{\sh}[1]{\mathcal{#1}}
\newcommand{\N}{{\mathbb N}}
\newcommand{\Z}{{\mathbb Z}}
\newcommand{\Q}{{\mathbb Q}}
\newcommand{\R}{{\mathbb R}}
\newcommand{\F}{{\mathbb F}}
\newcommand{\B}{{\mathbb B}}
\newcommand{\tens}{\mathbin{\otimes}}
\DeclareMathOperator*\colim{colim}
\DeclareMathAlphabet{\mathrmsl}{OT1}{cmr}{m}{sl}
\newcommand{\rssymb}[2]{\newcommand{#1}{\mathrmsl{#2}} }
\newcommand{\oper}[3][n]{\newcommand{#2}{\mathop{\mathrm{#3}}%
\ifx n#1\nolimits\else\limits\fi} }
\newcommand{\rsoper}[3][n]{\newcommand{#2}{\mathop{\mathrmsl{#3}}%
\ifx n#1\nolimits\else\limits\fi} }
\oper\Ad{Ad}
\oper\ad{ad}
\oper\val{val}
\oper\coker{coker}
\oper\mult{mult}
\oper\Iso{Iso}
\oper\End{End}
\oper\Aut{Aut}
\oper\Sub{Sub}
\oper\Alt{Alt}
\oper\Ext{Ext}
\oper\Pic {Pic}
\oper\Sym{Sym}
\oper\Spec{Spec}
\oper\Spf{Spf}
\oper\Sp{Sp}
\oper\Spa{Spa}
\oper\Proj{Proj}
\rsoper\divg{div}
\rsoper{\sym}{sym}
\rsoper{\alt}{alt}
\rsoper\trace{tr}
\rssymb\id{id}
\newcommand{\thismonth}{\ifcase\month\or
  January\or February\or March\or April\or May\or June\or
  July\or August\or September\or October\or November\or December\fi
  \space\number\year}
\newcommand{\CPA}{\mathrm{CPA}}
\newcommand{\Mod}{\mathrm{Mod}}
\title{On the difference between `tropical functions' and real-valued functions}
\author{Andrew W. Macpherson}
\begin{document}

\maketitle
\begin{abstract}
I introduce the concept of integral closure for elements and ideals in idempotent semirings, and establish how it corresponds to its namesake in commutative algebra. 

In the case of free semirings, integrally closed elements give canonical `maximal' representatives for the associated piecewise-affine functions on Euclidean space.
\end{abstract}

Idempotent semirings arise in nature as the co-ordinate algebras of tropical varieties, skeletons of analytic spaces, and singular affine manifolds. Recent years have seen more and more attempts to understand the structure of these objects - especially of tropical varieties - via idempotent semialgebra; whether extrinsically, through tropical Nullstellens\"atze \cite{IzhakianShustin, BertramEaston}, or intrinsically, though co-ordinate semirings \cite{Giansiracusa,IzhakianRowen,MaclaganRincon}.

Our geometric intuition for idempotent semirings comes from sets of convex, piecewise-affine functions with integral slopes, which form semirings under the operations of $\vee$ (pointwise max) and $+$ plus. 
Take, for example, the affine manifold $\R^n$. Its function semiring $\mathrm{CPA}_\Z(\R^n,\R)$ is generated under $\vee$ by affine functions, which in turn are generated, as an additive group, by the co-ordinates $X_i:\R^n\rightarrow\R$ and real constants.

When trying to understand the algebraic structure of $\mathrm{CPA}(\R^n,\R)$, our first guess might be that it is equal to the \emph{free semiring} $\R_\vee[\pm X_i]_{i=1}^n$ whose elements are `tropical polynomials': formal finite combinations $\bigvee_{k\in\Z^n}k\cdot X+\lambda_k$ with $\lambda_k\in\R$. However, when we investigate the evaluation homomorphism
\[ \R_\vee[\pm X_i]_{i=1}^n \rightarrow \mathrm{CPA}(\R^n,\R), \]
we find something odd: it is not injective! For example, 
\begin{align}\label{intro1} 2(0\vee X)=&0\vee X \vee 2X = 0\vee 2X \end{align}
as convex functions, while the second identity fails to hold in the free semiring. 

Similarly, while the function semiring evidently satisfies the cancellative law for addition, the source does not: while
\begin{align} \label{intro2} 3(0\vee X) &= (0\vee 2X) + (0\vee X) \end{align}
in any semiring, cancelling $(0\vee X)$ from both sides would reduce to the identity \eqref{intro1} which we have just observed is not true in $\R_\vee[\pm X_i]_{i=1}^n$.

These issues with the tropical polynomial ring have been known to the community for a long time. There have been some recent attempts to address them algebraically \cite[\S3.3]{BertramEaston}, \cite[\S2.2]{Izhakian}. 

The approach taken in this paper is more general: we will define a notion of \emph{integral closure} for elements in any idempotent semiring, and say that a semiring is \emph{normal} if every element is integrally closed. In the special case of free semirings, the integrally closed elements are `maximal' representatives of honest real-valued functions.

\begin{thrm}[\ref{MON_THM}]Let $\Delta$ be a polytope inside an affine space $N$,  $Q$ the group of affine functions on $N$, and $Q^+\subseteq Q$ the submonoid of functions less than or equal to zero on $\Delta$. If $\B[Q;Q^+]$ denotes the free semiring of the pair $(Q;Q^+)$ (defined \S\ref{INT_MON}), then
\[  \B[Q;Q^+] \rightarrow \mathrm{CPA}_\Z(\Delta,\R) \]
is a normalisation of idempotent semirings.\footnote{In the context of this paper, we prefer to work with bounded polyhedral subsets of tropical space rather than the space itself; $\B[Q;Q^+]$ is an accordingly modified version of the free semiring.}\end{thrm}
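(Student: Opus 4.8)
The plan is to unwind the definition of normalisation from \S\ref{INT_MON} into three checks for the evaluation homomorphism $\mathrm{ev}\colon\B[Q;Q^+]\to\mathrm{CPA}_\Z(\Delta,\R)$: that $\mathrm{ev}$ is surjective, that its target is normal, and that the congruence it induces is exactly the one generated by the integral-closure relations $a\sim\tilde a$, so that $\mathrm{ev}$ exhibits $\mathrm{CPA}_\Z(\Delta,\R)$ as the integral closure of $\B[Q;Q^+]$. In contrast to the classical (injective) normalisation of a domain, this map is surjective rather than injective, reflecting the failure of the cancellative law in the source noted in the introduction; its content lies entirely in identifying the kernel. Throughout, the guiding dictionary is that an element of $\B[Q;Q^+]$ is a (finitely generated) family of affine functions in $Q$, and $\mathrm{ev}$ sends it to its pointwise supremum on $\Delta$, a convex piecewise-affine function with integral slopes.

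First I would establish surjectivity. Every $f\in\mathrm{CPA}_\Z(\Delta,\R)$ is, being convex and piecewise-affine on the bounded polytope $\Delta$, the upper envelope of its finitely many affine pieces, each of integral slope and hence in $Q$; so $f$ lies in the image. Thus $\mathrm{CPA}_\Z(\Delta,\R)$ is a quotient of $\B[Q;Q^+]$, and it remains to pin down the congruence.

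Next, normality of the target. Since $\tilde f\geq f$ always, only the reverse is at issue, and here convexity is the essential input: on $\Delta$ each affine piece of $f$ supports $f$ from below, so $f$ is the pointwise supremum of the integral-slope affine functions lying below it. The integral-dependence inequalities then force any element integral over $f$ to be itself pointwise $\leq f$, so $f$ admits no proper integral enlargement; that is, $f=\tilde f$ and $\mathrm{CPA}_\Z(\Delta,\R)$ is normal. Boundedness of $\Delta$ enters in guaranteeing that these suprema remain piecewise-affine and are attained.

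The heart of the argument, and the step I expect to be the main obstacle, is the identification of $\Ker(\mathrm{ev})$ with the integral-closure congruence. For a single element $a$ I would identify $\tilde a$ with the maximal family having the same image: for each integral slope it contains the affine function $\ell+c_\ell$, where $c_\ell=\inf_\Delta(\mathrm{ev}(a)-\ell)$ is the largest constant with $\ell+c_\ell\leq\mathrm{ev}(a)$ on $\Delta$. Since adjoining dominated monomials does not change the pointwise maximum, $\mathrm{ev}(\tilde a)=\mathrm{ev}(a)$, giving one inclusion; for the reverse I would argue that two families with equal image share this maximal family, hence the same integral closure. The delicate point is that every comparison here must be made on $\Delta$ and not on all of $N$ — on the bounded polytope far more affine functions become comparable — and this is precisely why one passes to the free semiring of the pair $(Q;Q^+)$: the submonoid $Q^+$ encodes the relation ``$q\leq 0$ on $\Delta$'' as the order of the semiring. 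I would therefore first verify the compatibility $q\in Q^+\iff\mathrm{ev}(q)\leq 0$, so that the order and the integral-dependence computations internal to $\B[Q;Q^+]$ agree with pointwise comparison on $\Delta$; granting this, the function-theoretic equivalence ``same pointwise maximum $\iff$ same maximal representative'' upgrades to the asserted description of $\Ker(\mathrm{ev})$, completing the identification of the normalisation.
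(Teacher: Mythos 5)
Your overall decomposition --- surjectivity, normality of the target, and identification of the induced congruence with the integral-closure congruence --- is the correct reading of what ``normalisation'' demands, and your first two steps are sound: surjectivity is the finite upper-envelope description of convex piecewise-affine functions on a polytope, and normality of $\CPA_\Z(\Delta,\R)$ follows at once from cancellativity (on the compact $\Delta$ every function other than $-\infty$ is bounded, so corollary \ref{CANCELLATIVE} applies; your ``integral elements are pointwise below $f$'' reasoning is this cancellation in disguise, via criterion \emph{iii)} of proposition \ref{EXT_PROPS}).

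The gap is in your third step, which is also where the entire mathematical content of the theorem lives. You \emph{define} $\tilde a$ to be the maximal family $M(a)=\{\ell+c_\ell\}$ of dominated monomials, but what must be \emph{proved} is that this family coincides with the integral closure of $a$ in $\B[Q;Q^+]$. The containment $\tilde a\subseteq M(a)$ is the easy one (persistence plus normality of the target). The containment you actually need, $M(a)\subseteq\tilde a$ --- that is, that an affine $m\in Q$ with $m\leq\mathrm{ev}(a)$ pointwise on $\Delta$ is \emph{integral over} $a$ in the semiring-theoretic sense --- is asserted, never argued. It does not follow from ``adjoining dominated monomials does not change the pointwise maximum'': equality of $\mathrm{ev}$ is precisely the relation whose coincidence with integral dependence is in question, so invoking it here is circular. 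Nor does it follow from the compatibility $q\in Q^+\iff\mathrm{ev}(q)\leq0$, which holds by the very definition of $Q^+$ but only compares single affine functions, whereas integrality of $m$ over the family $\{a_i\}$ is a strictly stronger condition requiring a finite certificate inside $\B[Q;Q^+]$. Closing the gap needs a genuine convex-duality input. One can argue directly: a Farkas/Carath\'eodory argument on the compact rational $\Delta$ (this is where rationality of $\Delta$ and integrality of slopes enter, not merely the order-compatibility you propose to check) upgrades $m\leq\max_i a_i$ on $\Delta$ to an inequality $Nm\leq\sum_i n_ia_i$ in the $Q^+$-order with $\sum_i n_i=N$, whence
\[ N(m\vee a)\leq(N-1)(m\vee a)+a, \]
and the reduction principle of lemma \ref{SEMIRINGS_LEMMA} (valid for integral closures in an arbitrary semiring, as noted after its proof) exhibits $m\vee a$ as integral over $a$. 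Alternatively, this is exactly what the paper packages as lemma \ref{MON_CONVEX} plus polar duality: integrally closed ideals of $\B[Q\tens\Q]$ are convex (proved via divisibility), and a convex ideal is recovered from its polar set, i.e.\ from the upper convex hull of the graph of its evaluation, so two integrally closed ideals with equal $\mathrm{ev}$ coincide. Without one of these arguments your proof establishes only that $\mathrm{ev}$ factors through the normalisation, not that the factored map $\B^\nu[Q;Q^+]\rightarrow\CPA_\Z(\Delta,\R)$ is injective.
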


In light of this result, we see that in the case of a free semiring our notion of integrally closed element coincides with the \emph{saturated} tropical polynomials of \cite[\S3.3.2]{BertramEaston}.

\

The definition of integral closure in semirings is inspired by its namesake in commutative algebra, for which my main reference is \cite{SwansonHuneke}.

\begin{defnn}[{\cite[Def. 1.1.1]{SwansonHuneke}}]An ideal $I\trianglelefteq A$ is \emph{integrally closed} if all elements $z\in A$ \emph{integral over I}, that is, satisfying a monic polynomial
\[ 0=z^n+c_1z^{n-1}+\cdots + c_n,\quad c_i\in I^i  \]
are already contained in $I$.\end{defnn}

Assume, for sake of exposition, that $A$ is a domain. There is a simple geometric intuition behind the above definition: a function is integral over $I$ if and only if it lands in $I$ after pullback to some birational modification of $\Spec A$. In other words, two integrally closed ideals that become equal on some blow-up of $\Spec A$ are already equal in $A$. By blowing up to make any finite set of ideals into Cartier divisors, we can thereby show that the product operation on integrally closed ideals is \emph{cancellative}.

We may couch this observation in terms of semiring theory as follows: we write $\B[K;A]$ for the set of fractional ideals of the quotient field $K$ of $A$. It has the structure of an idempotent semiring with max and plus defined using ideal sum and product, respectively.

The set $\B^\nu[K;A]$ of integral closures of fractional ideals is naturally a quotient semiring of $\B[K;A]$, with the quotient map being the integral closure operator. Our previous observation amounts to the fact that plus (ideal product) is cancellative on $\B^\nu[K;A]$. Thus, integral closure is in this case again related to cancellativity of a semiring.

More precisely:

\begin{thrm}[\ref{citeme}]Let $A$ be a domain with fraction field $K$. The integral closure operator
\[ \B[A;A^+] \rightarrow \B^\nu[A;A^+] \]
is a normalisation of idempotent semirings.\end{thrm}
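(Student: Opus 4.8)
The plan is to show that the integral closure operator $\B[A;A^+] \to \B^\nu[A;A^+]$ is surjective (essentially by definition, since every integrally closed fractional ideal is the integral closure of itself) and that it satisfies the defining property of a normalisation: namely that the target is a \emph{normal} semiring (every element integrally closed) and the map is universal among maps to normal semirings, or — following the likely definition in \S\ref{INT_MON} — that it is the integral closure operator realising the target as the semiring of integrally closed elements. Since the text has already told us that the quotient map is the integral closure operator and that $\B^\nu[A;A^+]$ is a quotient semiring, the genuine mathematical content to verify is \emph{cancellativity of addition} (ideal product) on $\B^\nu[A;A^+]$, because the excerpt has framed normality for free/fractional-ideal semirings precisely as the statement that cancellation holds after passing to integral closures.

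First I would set up the dictionary: elements of $\B[A;A^+]$ are fractional ideals $I$ of $K$, $\vee$ is ideal sum $I + J$, and $+$ (the semiring addition, written additively) is ideal product $IJ$; the integral closure operator sends $I$ to its integral closure $\overline{I}$. I would recall from Swanson--Huneke the two facts that do the real work: that integral closure commutes with products up to closure, $\overline{\overline{I}\,\overline{J}} = \overline{IJ}$, and the valuative/geometric characterisation that $z \in \overline{I}$ iff $z$ pulls back into $I$ on every blow-up (equivalently, $v(z) \ge \min_{a \in I} v(a)$ for every discrete valuation $v$, or every Rees valuation of $I$). These give immediately that $\overline{(\cdot)}$ is a semiring homomorphism onto $\B^\nu[A;A^+]$, so that part is routine and I would state it quickly.

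The main step — and the one I expect to be the crux — is cancellativity: if $\overline{IL} = \overline{JL}$ for nonzero fractional ideals with $L \ne 0$, then $\overline{I} = \overline{J}$. I would prove this by passing to valuations. Writing $v(I) := \min_{a \in I} v(a)$ for a discrete valuation $v$ on $K$, the valuative criterion says $\overline{I}$ is determined by the data $\{v(I)\}_v$, and product corresponds to addition: $v(IL) = v(I) + v(L)$ with $v(L)$ finite since $L \ne 0$. Thus $\overline{IL} = \overline{JL}$ gives $v(I) + v(L) = v(J) + v(L)$ for all $v$, and cancelling the finite real number $v(L)$ yields $v(I) = v(J)$ for all $v$, hence $\overline{I} = \overline{J}$. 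The one genuine obstacle is making the valuative characterisation precise and ensuring the relevant minima are finite and achieved — this is exactly where the hypothesis that $A$ is a \emph{domain} (so $K$ is a field and fractional ideals behave well) and that $L$ is nonzero are used.

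Finally I would tie this back to the abstract definition from \S\ref{INT_MON}: cancellativity of $+$ on $\B^\nu[A;A^+]$, combined with the fact that the integral closure operator is an idempotent surjective semiring homomorphism whose fibres are exactly the classes of elements with equal closure, is precisely what it means for the map to be a normalisation, i.e. the universal cancellative (normal) quotient. I would check the universal property by noting that any homomorphism from $\B[A;A^+]$ to a normal semiring must factor through integral closure, since in a normal target integrally-equal ideals are forced equal; this identifies $\B^\nu[A;A^+]$ with the normalisation and completes the proof.
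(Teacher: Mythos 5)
Your argument splits into two halves: (a) the closure operator is a surjective semiring homomorphism onto $\B^\nu[A;A^+]$, which is cancellative and hence normal; and (b) the map has the universal property of a normalisation. Half (a) is correct (modulo the Noetherian hypothesis that the valuative criterion with discrete valuations requires --- present in corollary \ref{citeme} though suppressed in the introduction's statement), and it is a genuinely different route from the paper, which never argues with valuations: the paper instead identifies bounded localisations of $\B[A;A^+]$ with blow-ups of $\Spec A^+$ (lemma \ref{ALG_EXT}, via the blow-up formula), identifies the blow-up notion of closure with the classical one (proposition \ref{ALG_PROP}, corollary \ref{ALG_COR0}), and then obtains normality and the universal property simultaneously from the abstract theory of \S\ref{INT}--\S\ref{NORM} (proposition \ref{ALG_COMP}).

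The gap is in half (b). The sentence ``in a normal target integrally-equal ideals are forced equal'' conflates two different equivalence relations. What normality of a target $\gamma$ forces equal are elements that are integrally equivalent in the \emph{semiring-theoretic} sense of \S\ref{INT}: elements becoming equal in some bounded localisation, i.e.\ $\phi(I)+S=\phi(J)+S$ for some bounded $S$ (proposition \ref{EXT_PROPS}). Your hypothesis, however, is the \emph{classical} equivalence $\overline{I}=\overline{J}$ in the sense of \cite{SwansonHuneke}. For your factorisation argument to run, you must show that the classical relation implies the semiring-theoretic one: that $\overline{I}=\overline{J}$ forces $IL=JL$ \emph{exactly}, for some nonzero finitely generated fractional ideal $L$. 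This is not formal, and valuations cannot supply it, since they only detect closures and never exact equality of ideals. It is precisely the reduction criterion \cite[Cor.\ 1.2.5]{SwansonHuneke}: for $I\subseteq J$ with $J\subseteq\overline{I}$ over a Noetherian ring, one has $IJ^n=J^{n+1}$ for $n\gg0$, so one may take $L=J^n$; this is implication \emph{ii)}$\Rightarrow$\emph{iii)} of proposition \ref{ALG_PROP} and is where the paper's proof does its real work. Without it, your argument only exhibits $\B^\nu[A;A^+]$ as \emph{some} normal quotient of $\B[A;A^+]$; the universal property then yields a surjection from the true normalisation onto $\B^\nu[A;A^+]$, but nothing rules out the existence of ideals $I,J$ with $\overline{I}=\overline{J}$ that never become equal in any bounded localisation, in which case that surjection would not be injective and $\B^\nu[A;A^+]$ would be a proper quotient of the normalisation rather than the normalisation itself.
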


\section*{Notation and conventions}

My notations mainly follow those of \cite{mac}; to save the reader leafing though that paper, I recap much of the basic notations here, along with some innovations.

\begin{itemize}
\item Idempotent semirings are commutative algebras in the category $\Mod_\B$ of idempotent monoids, or of join semilattices, if you like. My notation $(\vee,-\infty,+,0)$ for semiring operations follows the intuition of the \emph{max-plus algebra}, as in the introduction. Accordingly, the tensor operation on $\Mod_\B$ is denoted $\oplus$ rather than $\otimes$ - beware that it is \emph{not} a direct sum operation on $\Mod_\B$.

All semirings in this paper are idempotent. The basic examples are the initial \emph{Boolean semifield} $\B=\{-\infty,0\}$ and the \emph{rank one (max-plus) semifields} $\Z_\vee,\Q_\vee,\R_\vee$, where the subscript $\vee$ means `tack on $-\infty$', the identity for the operation $\vee$.

\item Semirings and their modules are in particular partially ordered sets, and so terminology from order theory carries over here. An \emph{(order) ideal} is a lower set closed under $\vee$. There is no requirement that it be stable under the action of any semiring, and thus should not be confused with the notion of \emph{semiring ideal}.

The ideal generated by two ideals $\iota_1,\iota_2$ is called their \emph{join}, and denoted $\iota_1\vee\iota_2$. If $\iota_i$ are principal generated by elements $X_i$, then $\iota_1\vee\iota_2$ is generated by $X_1\vee X_2$.

\item The set of all order ideals of $\mu$ is denoted  $\sh L(\mu)$; it is a complete lattice (whence the letter $\sh L$). A homomorphism $\mu_1\rightarrow\mu_2$ induces adjoint image and preimage monotone maps $\sh L(\mu_1)\leftrightarrows\sh L(\mu_2)$. (The image of an order ideal may fail to be lower; one must take the lower set generated by the image to define the covariance of $\sh L$.)

\item The set of elements less than zero in an idempotent semiring $\alpha$ is called its \emph{semiring of integers}, and denoted $\alpha^\circ$.

\item Localisations of semirings are defined as usual for commutative algebras in a monoidal category; note only that as a consequence of our notation, a localisation of an $\alpha$-module $\mu$ at a set of elements $S\in\alpha$ adjoins \emph{additive} inverses to $S$:\[\mu\rightarrow\mu[-S].\]
Semirings are assumed to satisfy the \emph{Tate condition}, which is that $\alpha$ is a localisation of $\alpha^\circ$. This is equivalent to requiring every element of $\alpha$ to be bounded above by some invertible element. An element that is also bounded below by an invertible element is said to be \emph{bounded}. In particular, all invertible elements are bounded.

When $S\in \alpha$ is bounded, the localisation $\alpha[-S]$ also satisfies the Tate condition. We  call this a \emph{bounded localisation}. For more information about bounded localisation, cf.\ \cite[\S5]{mac}. (In \emph{loc.\ cit.}\ another kind of localisation is discussed, called \emph{cellular}, which we do not use in this paper.)

\item The localisation of a semiring $\alpha$ at the system of all bounded elements is called the \emph{bounded difference semiring}, denoted $\psi:\alpha\rightarrow \mathrm{PL}(\alpha)$. 

\end{itemize}

\section*{Acknowledgements}
This paper was produced  under the funding of a Hodge fellowship, provided by W.\ D.\ Hodge Foundation. I would like to thank IH\'ES also for providing a peaceful environment for the completion of this work.
I'd also like to thank an anonymous referee for motivating me to make this a better paper.

\section{Integral closure}\label{INT}
Throughout this section, we fix an idempotent semiring $\alpha$ and an $\alpha$-module $\mu$.

\begin{prop}\label{EXT_PROPS}Let $\iota_1\subseteq\iota_2\subseteq\mu$ an inclusion of ideals of $\mu$. The following are equivalent:
\begin{enumerate}\item every element of $\iota_2$ maps into the image of $\iota_1$ in some bounded localisation of $\mu$;
\item the images of $\iota_1$ and $\iota_2$ in $\mu\oplus_\alpha\mathrm{PL}(\alpha)$ are equal;
\item for each $X\in\iota_2$, there exists bounded $S\in\alpha$ such that
\[ X+S\in\iota_1+S. \]
\end{enumerate}
When $\iota_2$ is finitely generated, we may add:
\begin{enumerate}
\item[iv)] $\iota_1$ and $\iota_2$ become equal in some bounded localisation of $\mu$;
\item[v)] there exists bounded $S\in\alpha$ such that
\[ \iota_1+S=\iota_2+S. \]\hfill\end{enumerate}\end{prop}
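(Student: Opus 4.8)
The plan is to prove the two equivalences (i)$\Leftrightarrow$(iii) and (i)$\Leftrightarrow$(ii) among the first three conditions, and then, under the finite-generation hypothesis, to adjoin (iv) and (v) by a uniform-denominator argument. The engine throughout is the explicit description of bounded localisation from \cite[\S5]{mac}: for a single bounded $S$, the module $\mu[-S]$ is the filtered colimit of the translation maps $\mu\xrightarrow{+S}\mu\xrightarrow{+S}\cdots$, so that the image $\bar X$ of $X\in\mu$ satisfies $\bar X\le\bar Y$ exactly when $X+nS\le Y+nS$ in $\mu$ for some $n$; absorbing $nS$ into a larger bounded element (again called $S$) this reads simply $X+S\le Y+S$. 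The bounded difference module is then the further filtered colimit $\mu\oplus_\alpha\mathrm{PL}(\alpha)=\colim_{S}\mu[-S]$ over all bounded $S$, directed by addition.

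For (i)$\Leftrightarrow$(iii), I would first record that, since by the covariance of $\sh L$ the image of the order ideal $\iota_1$ is the lower set generated by the images of its elements, the image of $X\in\iota_2$ lies in the image of $\iota_1$ in $\mu[-S]$ precisely when there is $Z\in\iota_1$ with $\bar X\le\bar Z$, i.e.\ with $X+S\le Z+S$ after enlarging $S$. As $\iota_1+S$ is the lower set generated by $\{Z+S:Z\in\iota_1\}$, this last inequality says exactly that $X+S\in\iota_1+S$, which is (iii). Both directions are the same computation read forwards and backwards, the only care being to replace $Z$ or the bound $S$ by a larger bounded element when clearing the denominator.

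For (i)$\Leftrightarrow$(ii): since $\iota_1\subseteq\iota_2$, condition (ii) is equivalent to the reverse inclusion of images of $\iota_2$ into those of $\iota_1$ in $\colim_S\mu[-S]$, that is, to every element of $\iota_2$ having image in the image of $\iota_1$ there. Because this colimit is filtered and computed on underlying posets, a single element's image lands in the image of $\iota_1$ in the colimit if and only if it already does so at some finite stage $\mu[-S]$, which is exactly (i). The point requiring verification is that passing to images of ideals commutes with the filtered colimit; for this I would use that the colimit cocone preserves finite joins and lower sets, so that the image of $\iota_1$ in the colimit is the union of its images in the stages $\mu[-S]$.

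Finally, suppose $\iota_2$ is finitely generated; as order ideals are only required to be lower and closed under $\vee$, the ideal generated by $X_1,\dots,X_m$ is the downward closure of $X_1\vee\cdots\vee X_m$, so $\iota_2$ is in fact principal, generated by some $X$. Then (i) need only be checked for $X$, supplying one bounded $S$ with $X$ mapping into the image of $\iota_1$ in $\mu[-S]$; at that stage the image of $\iota_2$, being the lower set generated by $\bar X$, is contained in, hence equal to, the image of $\iota_1$, which is (iv). Clearing this single denominator by the calculus above converts the equality in $\mu[-S]$ into $\iota_1+S'=\iota_2+S'$ for a bounded $S'$, giving (v), while (v)$\Rightarrow$(iv)$\Rightarrow$(i) are immediate. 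The main obstacle is precisely this denominator bookkeeping: amalgamating the separate bounded witnesses of (i) and (iii) for different elements into a single one. This is exactly where finite generation is needed, and explains why (iv) and (v) are unavailable for general ideals.
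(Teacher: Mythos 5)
Your proof is correct and follows essentially the same route as the paper's: \emph{i)}$\Leftrightarrow$\emph{ii)} via the construction of filtered colimits of sets applied to $\mu\oplus_\alpha\mathrm{PL}(\alpha)=\colim_S\mu[-S]$, conditions \emph{iii)} and \emph{v)} by unwinding explicitly what equality/membership in $\mu[-S]$ means, and \emph{iv)} from \emph{i)} by passing from generators to the ideal they generate. Your observation that a finitely generated order ideal is automatically principal (generated by the join of its generators) is a nice touch that makes the paper's implicit amalgamation of bounded witnesses unnecessary, but it does not change the substance of the argument.
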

\begin{proof}Since $\mathrm{PL}(\alpha)$ is, by definition, the filtered colimit of all bounded localisations of $\alpha$, \emph{i)} and \emph{ii)} are equivalent by the construction of filtered colimits of sets.

Condition \emph{iv)} implies \emph{i)}. Conversely, if a set of elements of $\iota_2$ lands in $\iota_1$ after some bounded localisation, then the same is true for the ideal that they generate. Thus, if $\iota_2$ is finitely generated, \emph{i)} implies \emph{iv)}.

Condition \emph{iii)}, resp.\ \emph{v)}, falls out of condition \emph{i)}, resp.\ \emph{iv)}, by writing down explicitly what it means for two elements to become equal in the localisation $\mu[-S]$.\end{proof}

\begin{defn}An inclusion $\iota_1\subseteq \iota_2$ of ideals of $\mu$ is said to be an \emph{integral extension} if the equivalent conditions of proposition \ref{EXT_PROPS} are satisfied. We also say that $\iota_2$ is \emph{integral over} $\iota_1$.

Similarly, we say that an element $X_2$ is integral over $X_1$ if this is the case for the ideals they generate.\end{defn}

The following facts are immediate consequences of the definition, and are easiest to see starting from criterion \emph{ii)} of proposition \ref{EXT_PROPS}:
\begin{itemize}
\item Because taking the image commutes with colimits, any join of integral extensions is an integral extension.
\item The notion of integral extension is transitive: if $\iota_1\subseteq\iota_2$ and $\iota_2\subseteq\iota_3$ are integral extensions, then $\iota_1\subseteq\iota_3$ is an integral extension.
\end{itemize}
From the first item, it follows that there is a \emph{largest} integral extension of any ideal $\iota$; in light of criterion \emph{ii)} of proposition \ref{EXT_PROPS}, this is simply the preimage of its image in $\mu\oplus_\alpha\mathrm{PL}(\alpha)$. By the second item, this extension has no non-trivial integral extensions of its own.

In other words, our notion of integral extension of ideals is accompanied by a reasonable operation of \emph{integral closure}.

\begin{prop}\label{IDEAL_PROPS}Let $\mu$ be an $\alpha$-module, $\iota\subseteq\mu$ an ideal. The following are equivalent:
\begin{enumerate}\item every ideal integral over $\iota$ is contained in $\iota$;
\item $\phi^{-1}\phi\iota=\iota$ for all bounded localisations $\phi:\mu\rightarrow\mu[-S]$;
\item $\psi^{-1}\psi\iota=\iota$ where $\psi:\mu\rightarrow\mu\oplus_\alpha\mathrm{PL}(\alpha)$;
\item for every ideal $\iota^\prime\subseteq\mu$ and $S\in\alpha$,
\begin{align}\label{equations} \iota^\prime+S\leq \iota+S \quad&\Rightarrow\quad\iota^\prime\leq \iota \end{align}\end{enumerate}\end{prop}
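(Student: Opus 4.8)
The plan is to run all four comparisons through the identification, made in the paragraph preceding the statement, of the largest integral extension of $\iota$ with $\psi^{-1}\psi\iota$. First I would dispatch \emph{i)} $\Leftrightarrow$ \emph{iii)}: every ideal integral over $\iota$ contains $\iota$, so \emph{i)} says precisely that $\iota$ admits no proper integral extension, i.e.\ that its largest integral extension is $\iota$ itself; since that largest extension is exactly $\psi^{-1}\psi\iota$, this is verbatim \emph{iii)}.

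Next I would treat \emph{ii)} $\Leftrightarrow$ \emph{iii)} by realising $\psi$ as a filtered colimit. As $\mathrm{PL}(\alpha)$ is the filtered colimit of the bounded localisations of $\alpha$ and $\mu\oplus_\alpha-$ commutes with colimits, $\mu\oplus_\alpha\mathrm{PL}(\alpha)=\colim_S\mu[-S]$ over bounded $S$. By the construction of filtered colimits of sets, $\psi(X)$ lies below the image of some $Y\in\iota$ exactly when $\phi_S(X)\le\phi_S(Y)$ already at some finite stage; hence $\psi^{-1}\psi\iota=\bigcup_S\phi_S^{-1}\phi_S\iota$, a directed union of ideals, each containing $\iota$. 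Condition \emph{iii)} says the union is $\iota$, which forces every term to equal $\iota$, namely \emph{ii)}; conversely \emph{ii)} collapses the union back to $\iota$.

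The substance is \emph{iii)} $\Leftrightarrow$ \emph{iv)}, which I would route through criterion \emph{iii)} of Proposition \ref{EXT_PROPS}: $X$ is integral over $\iota$ precisely when $X+S\in\iota+S$ for some bounded $S$. For \emph{iv)} $\Rightarrow$ \emph{iii)} I take $X\in\psi^{-1}\psi\iota$, pick such a bounded $S$, and apply \emph{iv)} to the principal ideal $\ip{X}$: its translate $\ip{X}+S=\ip{X+S}$ lies in $\iota+S$, so cancellation gives $\ip{X}\le\iota$, i.e.\ $X\in\iota$; thus $\psi^{-1}\psi\iota\subseteq\iota$, and the reverse inclusion is automatic. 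For \emph{iii)} $\Rightarrow$ \emph{iv)} I apply $\psi$ to the hypothesis $\iota'+S\le\iota+S$: in $\mu\oplus_\alpha\mathrm{PL}(\alpha)$ the scalar $\psi(S)$ is invertible, so I may cancel it to obtain $\psi\iota'\subseteq\psi\iota$, whence $\iota'\subseteq\psi^{-1}\psi\iota=\iota$ by \emph{iii)}.

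I expect the main obstacle to be the asymmetry in this last step. Cancelling $\psi(S)$ demands that $\psi(S)$ be invertible in the difference semiring, which holds exactly when $S$ is bounded — this is precisely where the Tate condition enters — so the point requiring care is to confirm that it suffices to test \emph{iv)} on bounded $S$, and to match this explicit cancellation condition against the elementwise integrality criterion of Proposition \ref{EXT_PROPS}. Everything else is a matter of unwinding the colimit and localisation definitions.
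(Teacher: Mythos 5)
Your proposal is correct and is essentially the paper's own proof written out in full: the paper's entire argument is that conditions \emph{ii)--iv)} are condition \emph{i)} reinterpreted through criteria \emph{i)--iii)} of Proposition \ref{EXT_PROPS}, and your three equivalences (largest integral extension $=\psi^{-1}\psi\iota$, the filtered-colimit description of $\mu\oplus_\alpha\mathrm{PL}(\alpha)$, and cancellation of bounded scalars in the difference semiring) are exactly those reinterpretations, merely organised with \emph{iii)} rather than \emph{i)} as the hub. On the one point you flag: condition \emph{iv)} must indeed be read with $S$ \emph{bounded} --- as in criteria \emph{iii)} and \emph{v)} of Proposition \ref{EXT_PROPS}, and as the paper's own one-line proof tacitly assumes --- since for unbounded $S$ (e.g.\ $S=-\infty$, where $\iota'+S\leq\iota+S$ holds trivially) the implication \eqref{equations} would fail for every proper ideal; with that reading your argument is complete.
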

\begin{proof}Conditions \emph{ii-iv)} are obtained by interpreting the first according to definitions \emph{i-iii)}, in that order, of proposition \ref{EXT_PROPS}.\end{proof}

\begin{defn}An ideal $\iota\subseteq\mu$ is said to be \emph{integrally closed} when the equivalent conditions of proposition \ref{IDEAL_PROPS} are satisfied.

An \emph{integral closure} of $\iota$ is an integrally closed ideal, integral over $\iota$. We have seen that every ideal has a unique integral closure.\end{defn}

Note two immediate consequences of the definition:
\begin{itemize}\item If $\iota\subseteq\mu$ is integrally closed, then so is its image in any bounded localisation of $\mu$.
\item Since both image and inverse image commute with filtered colimits, a filtered union of integrally closed ideals is integrally closed.\end{itemize}

Denote by $\sh L^\nu(\mu)\subseteq\sh L(\mu)$ the set of integrally closed ideals of $\mu$. Because every ideal has an integral closure, this subset is \emph{reflective}. It is also stable for filtered suprema.

Let $f:\mu_1\rightarrow\mu_2$ be an $\alpha$-module homomorphism. There are induced adjoint image and preimage maps between the $\sh L^\nu(\mu_i)$, which are compatible with the maps on $\sh L$ in the following sense:

\begin{lemma}[Functoriality of $\sh L^\nu$]
\

\begin{itemize}\item Let $\iota\subseteq\mu_1$ be an ideal. The integral closure of the image of $\iota$ in $\mu_2$ is integral over the image in $\mu_2$ of its integral closure in $\mu_1$. The corresponding square
\[\xymatrix{ \sh L(\mu_1)\ar[r]\ar[d] & \sh L(\mu_2)\ar[d] \\ \sh L^\nu(\mu_1)\ar[r] & \sh L^\nu(\mu_2)}\]
commutes (cf.\ \cite[Rmk.\ 1.1.3, (7)]{SwansonHuneke}). \hfill\emph{persistence property}
\item  If $\iota\subseteq\mu_2$ is integrally closed, then so is $f^{-1}\iota\subseteq\mu_1$. The square
\[\xymatrix{ \sh L^\nu(\mu_2)\ar[r]^{f^{-1}}\ar[d] & \sh L^\nu(\mu_1)\ar[d] \\ \sh L(\mu_2)\ar[r]^{f^{-1}} & \sh L(\mu_1)}\]
commutes (cf.\ \cite[Rmk.\ 1.1.3, (8)]{SwansonHuneke}). \hfill \emph{contraction property}
\item Let $\iota_i\subseteq\mu_i$. The integral closure of $\iota_1+\iota_2$ in $\mu_1\oplus_\alpha\mu_2$ is integral over $\iota_1^\nu+\iota_2^\nu$. The square
\[\xymatrix{\sh L(\mu_1)\times\sh L(\mu_2)\ar[r]\ar[d] &\sh L(\mu_1\oplus_\alpha\mu_2) \ar[d] \\
\sh L^\nu(\mu_1)\times\sh L^\nu(\mu_2)\ar[r] & \sh L^\nu(\mu_1\oplus_\alpha\mu_2)}\]
commutes (cf.\ \cite[Rmk.\ 1.3.2, (4)]{SwansonHuneke}).\hfill \emph{sum property}
\end{itemize}\end{lemma}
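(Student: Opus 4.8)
The plan is to reduce all three statements to the description of integral closure as $\iota^\nu=\psi^{-1}\psi\iota$, where $\psi:\mu\to\mu\oplus_\alpha\mathrm{PL}(\alpha)$ is the map of criterion \emph{iii)} of proposition \ref{IDEAL_PROPS}, together with the adjunction between the image map $f$ and the preimage map $f^{-1}$ on order ideals. From that adjunction I will use only the unit inequality $\iota\leq f^{-1}f\iota$ and the counit inequality $f f^{-1}\kappa\leq\kappa$; applied to $\psi$ these read $\iota\leq\iota^\nu$ and $\psi\psi^{-1}\leq\iden$. I will establish persistence first, deduce contraction from it formally, and treat the sum property by base change of $\psi$ along $\alpha\to\mathrm{PL}(\alpha)$.

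For persistence, write $g=f\oplus_\alpha\mathrm{PL}(\alpha):\mu_1\oplus_\alpha\mathrm{PL}(\alpha)\to\mu_2\oplus_\alpha\mathrm{PL}(\alpha)$, so that naturality of the base-change functor $(-)\oplus_\alpha\mathrm{PL}(\alpha)$ gives the commuting square $g\psi_1=\psi_2 f$ of module maps, hence the identity $g\psi_1\iota=\psi_2 f\iota$ of image ideals. The core claim is $f\iota^\nu\leq(f\iota)^\nu$, that is $f\psi_1^{-1}\psi_1\iota\leq\psi_2^{-1}\psi_2 f\iota$. By the adjunction $\psi_2\dashv\psi_2^{-1}$ this is equivalent to $\psi_2 f\psi_1^{-1}\psi_1\iota\leq\psi_2 f\iota$; rewriting $\psi_2 f=g\psi_1$ and applying the counit $\psi_1\psi_1^{-1}\leq\iden$ to $\psi_1\iota$ collapses the left-hand side below $g\psi_1\iota=\psi_2 f\iota$, proving the claim. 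Since then $f\iota\leq f\iota^\nu\leq(f\iota)^\nu$ with $(f\iota)^\nu$ integrally closed, taking integral closures throughout yields $(f\iota)^\nu=(f\iota^\nu)^\nu$, which is exactly commutativity of the persistence square; and the displayed chain of inclusions says precisely that $(f\iota)^\nu$ is integral over $f\iota^\nu$.

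Contraction then follows with no further computation. For an integrally closed $\iota\subseteq\mu_2$, apply persistence to the ideal $f^{-1}\iota$ to get $f(f^{-1}\iota)^\nu\leq(f f^{-1}\iota)^\nu$. The counit gives $f f^{-1}\iota\leq\iota$, and as $\iota$ is integrally closed $(f f^{-1}\iota)^\nu\leq\iota^\nu=\iota$; hence $f(f^{-1}\iota)^\nu\leq\iota$, which by adjunction means $(f^{-1}\iota)^\nu\leq f^{-1}\iota$. Combined with the unit inequality this forces $(f^{-1}\iota)^\nu=f^{-1}\iota$, so $f^{-1}\iota$ is integrally closed. The asserted square commutes on the nose, the top arrow being the restriction of the bottom one and requiring no reflection.

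For the sum property the key input is the base-change identity $(\mu_1\oplus_\alpha\mu_2)\oplus_\alpha\mathrm{PL}(\alpha)\cong(\mu_1\oplus_\alpha\mathrm{PL}(\alpha))\oplus_{\mathrm{PL}(\alpha)}(\mu_2\oplus_\alpha\mathrm{PL}(\alpha))$, a formal consequence of associativity and commutativity of the tensor. Under this identification the structure map $\Psi$ for $\mu_1\oplus_\alpha\mu_2$ carries a generator $x_1\oplus x_2$ of $\iota_1+\iota_2$ to $\psi_1 x_1\oplus\psi_2 x_2$; since the image map preserves ideal generation, one finds $\Psi(\iota_1+\iota_2)=\psi_1\iota_1+\psi_2\iota_2$ and likewise $\Psi(\iota_1^\nu+\iota_2^\nu)=\psi_1\iota_1^\nu+\psi_2\iota_2^\nu$. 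But $\psi_i\iota_i=\psi_i\iota_i^\nu$ by criterion \emph{ii)} of proposition \ref{EXT_PROPS}, so the two external sums coincide and $\Psi(\iota_1+\iota_2)=\Psi(\iota_1^\nu+\iota_2^\nu)$; applying $\Psi^{-1}$ gives $(\iota_1+\iota_2)^\nu=(\iota_1^\nu+\iota_2^\nu)^\nu$, which delivers both the commuting square and the assertion that the latter closure is integral over $\iota_1^\nu+\iota_2^\nu$. The one step demanding genuine care, and the main obstacle, is this generation bookkeeping: one must verify that the external sum of the images, the lower set generated by the elements $y_1\oplus y_2$ with $y_i\leq\psi_i x_i$, is already generated by the images $\psi_i x_i$ of generators, which uses that $+$ distributes over $\vee$ and that each $\psi_i\iota_i$ is a lower set closed under join.
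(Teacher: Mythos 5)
Your proof is correct, and its engine is the same as the paper's: everything is reduced to the realisation of integral closure as $\psi^{-1}\psi$ for the base change $\psi:\mu\rightarrow\mu\oplus_\alpha\mathrm{PL}(\alpha)$, the naturality square $g\psi_1=\psi_2f$, and the image/preimage adjunction. For persistence and the sum property you are, in substance, expanding the paper's one-line proofs: the paper proves persistence by noting that the image of an integral extension is again integral ``by the naturality of localisations'' (i.e.\ $\psi_1\iota=\psi_1\iota'$ gives $\psi_2f\iota=g\psi_1\iota=g\psi_1\iota'=\psi_2f\iota'$), and proves the sum property by ``compatibility of localisation with tensor product'', which is precisely your base-change identification plus the generator bookkeeping you flag. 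Where you genuinely differ is contraction: the paper does not route it through persistence at all, but observes directly that an integrally closed $\iota\subseteq\mu_2$ satisfies $\iota=\psi_2^{-1}\psi_2\iota$, so that $f^{-1}\iota=f^{-1}\psi_2^{-1}\psi_2\iota=\psi_1^{-1}\bigl(g^{-1}\psi_2\iota\bigr)$ is a pullback along $\psi_1$ of some ideal, and any such pullback is integrally closed because $\psi_1^{-1}\psi_1\psi_1^{-1}=\psi_1^{-1}$. Your derivation (persistence, then the counit, then adjunction) is equally valid, and it has the conceptual merit of exhibiting contraction as a purely formal consequence of persistence for any closure operator of the form $\psi^{-1}\psi$; the paper's argument is shorter and independent of persistence. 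One step you should make explicit if you write this up: at the end of persistence you assert that the chain $f\iota\leq f\iota^\nu\leq(f\iota)^\nu$ ``says precisely'' that $(f\iota)^\nu$ is integral over $f\iota^\nu$ --- this needs the sandwich observation that $\psi_2f\iota=\psi_2(f\iota)^\nu$ forces $\psi_2f\iota^\nu$ to equal both, whence the extension is integral by criterion \emph{ii)} of proposition \ref{EXT_PROPS}.
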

\begin{proof}(persistence property) Indeed, if $\iota\subseteq\iota^\prime$ is an integral extension in $\mu_1$, then so is its image in $\mu_2$, by the naturality of localisations.

(contraction property) By inspecting the square
\[\xymatrix{ \mu_1 \ar[r]\ar[d] & \mu_2\ar[d] \\ \mu_1\oplus_\alpha\mathrm{PL}(\alpha)\ar[r] & \mu_2\oplus_\alpha\mathrm{PL}(\alpha) }\] we see that $f^{-1}\iota$ is the pullback of an ideal from $\mu_1\oplus_\alpha\mathrm{PL}(\alpha)$.

(sum property) Let $\iota_i\subseteq\mu_i$, and let $\iota_i\subseteq\iota_i^\prime$ be a pair of integral extensions. By the compatibility of localisation with tensor product, $\iota_1+\iota_2\subseteq\iota_1^\nu+\iota_2^\nu$ is an integral extension inside $\mu_1\oplus\mu_2$.\end{proof}

As a special case of the contraction property:

\begin{cor}If $\mu_1\subseteq\mu_2$ is a submodule, an ideal $\iota\subseteq\mu_1$ is integrally closed in $\mu_1$ if and only if it is integrally closed in $\mu_2$ (cf.\ \cite[Rmk.\ 1.1.3, (9)]{SwansonHuneke}).\end{cor}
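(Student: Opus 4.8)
The plan is to treat the submodule inclusion as a homomorphism $i:\mu_1\hookrightarrow\mu_2$ and to run the contraction property in both directions, the only nontrivial input being that bounded localisation preserves the monomorphism $i$. Write $\ip{\iota}$ for the ideal of $\mu_2$ generated by (the image of) $\iota$, i.e.\ its downward closure, so that ``integrally closed in $\mu_2$'' means ``$\ip{\iota}$ integrally closed in $\mu_2$''. The first thing I would record is the bookkeeping identity $i^{-1}\ip{\iota}=\iota$: an element of $\mu_1$ lying below some member of $\iota$ already lies in $\iota$, since $\iota$ is downward closed in $\mu_1$. Since $i$ is a module homomorphism it preserves $\vee$, and is therefore automatically order-reflecting---$i(a)\le i(b)$ means $i(a\vee b)=i(b)$, whence $a\vee b=b$ once $i$ is injective---a point I will reuse.

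One implication is then immediate: if $\ip{\iota}$ is integrally closed in $\mu_2$, the contraction property applied to $i$ shows that $i^{-1}\ip{\iota}=\iota$ is integrally closed in $\mu_1$.

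For the converse I would prove the stronger statement that integral closure commutes with contraction along $i$, namely $\iota^\nu=i^{-1}\bigl(\ip{\iota}^\nu\bigr)$, the closures being computed in $\mu_1$ and $\mu_2$ respectively. Using criterion iii) of Proposition \ref{IDEAL_PROPS}, both sides unwind through the commuting square relating $\psi_1:\mu_1\to\mu_1\oplus_\alpha\mathrm{PL}(\alpha)$ and $\psi_2:\mu_2\to\mu_2\oplus_\alpha\mathrm{PL}(\alpha)$ via $i_{\mathrm{PL}}:=i\oplus_\alpha\mathrm{PL}(\alpha)$: one has $\iota^\nu=\psi_1^{-1}\ip{\psi_1\iota}$ and $i^{-1}\bigl(\ip{\iota}^\nu\bigr)=\psi_1^{-1}\,i_{\mathrm{PL}}^{-1}\ip{i_{\mathrm{PL}}\psi_1\iota}$, so it suffices to show $i_{\mathrm{PL}}^{-1}\ip{i_{\mathrm{PL}}\psi_1\iota}=\ip{\psi_1\iota}$. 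Granting that $i_{\mathrm{PL}}$ is injective, it is order-reflecting and join-preserving by the remark above, and for such a map the preimage of the ideal generated by an image is the original ideal, giving the equality. With $\iota^\nu=i^{-1}\bigl(\ip{\iota}^\nu\bigr)$ established, ``$\iota$ integrally closed in $\mu_1$'' (that is, $\iota=\iota^\nu$) becomes equivalent to $\iota=i^{-1}\bigl(\ip{\iota}^\nu\bigr)$, which combined with the first implication yields the asserted equivalence.

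The main obstacle is the single technical input that $i_{\mathrm{PL}}=i\oplus_\alpha\mathrm{PL}(\alpha)$ remains injective, i.e.\ that the bounded localisation functor $-\oplus_\alpha\mathrm{PL}(\alpha)$ preserves monomorphisms of $\alpha$-modules. I expect this from the exactness of bounded localisation developed in \cite[\S5]{mac}, realising $\mathrm{PL}(\alpha)$ as a filtered colimit of bounded localisations and invoking that filtered colimits of injections are injections; everything else is formal manipulation of images and preimages of order ideals. Once injectivity is granted the order-reflecting property, and hence the whole argument, is automatic because module homomorphisms preserve $\vee$.
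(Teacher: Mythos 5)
Your backward implication, your bookkeeping identities, and your ``stronger statement'' $\iota^\nu=i^{-1}(\ip{\iota}^\nu)$ --- including the one real input, that $i_{\mathrm{PL}}=i\oplus_\alpha\mathrm{PL}(\alpha)$ is injective, which the paper itself records in the ``mono-flat'' bullet of \S\ref{NORM} --- are all correct. The genuine gap is your final sentence. From $\iota=i^{-1}(\ip{\iota}^\nu)$ you cannot conclude that $\ip{\iota}$ is integrally closed in $\mu_2$: that identity controls only which elements \emph{of $\mu_1$} lie in the closure $\ip{\iota}^\nu$, and says nothing about elements of $\mu_2\setminus\mu_1$. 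Indeed, under your gloss of ``integrally closed in $\mu_2$'' (namely ``$\ip{\iota}$ is integrally closed as an ideal of $\mu_2$'') the forward implication is \emph{false}, and the paper's own example \ref{INT_EGS} witnesses this. There $\mu_2=\mu=\alpha^2/((X\vee0)+e_1=(X\vee0)+e_2)$; take $\mu_1$ to be the cyclic submodule generated by $e_1$ and $\iota=\{y\in\mu_1:y\leq e_1\}$. The defining congruence identifies no two distinct elements of the form $(a,-\infty)$ (the coordinatewise join is a congruence invariant), so $\mu_1\cong\alpha$ is free of rank one, hence normal, and $\iota$ is integrally closed in $\mu_1$. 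Yet $\ip{\iota}=\ip{e_1}$ is not integrally closed in $\mu_2$: since $e_2+(X\vee0)=e_1+(X\vee0)$ with $X\vee0$ bounded, $e_2$ is integral over $e_1$, while $e_2\not\leq e_1$ --- this is exactly the paper's point that neither basis vector of $\mu$ is integrally closed.

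The repair is interpretative rather than technical. Read the corollary, in parallel with \cite[Rmk.~1.1.3, (9)]{SwansonHuneke}, as asserting that integral closure may be tested in $\mu_1$ or in $\mu_2$ with the same outcome \emph{for elements of $\mu_1$}; formally, ``$\iota$ is integrally closed in $\mu_2$'' means $\iota=i^{-1}(\ip{\iota}^\nu)$, i.e.\ $\iota$ is the contraction of an integrally closed ideal of $\mu_2$. Under that reading your two established facts are precisely the two implications: the contraction property gives one direction (this is the paper's entire proof, ``as a special case of the contraction property''), and your stronger statement gives the other. Your write-up is in fact more complete than the paper's here, since you isolate the flatness of $-\oplus_\alpha\mathrm{PL}(\alpha)$ needed for the inclusion $i^{-1}(\ip{\iota}^\nu)\subseteq\iota^\nu$, which the one-line proof leaves implicit. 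So: keep everything through the stronger statement, state the conclusion as that identity, and delete the last inference.
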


\section{Normal modules}\label{NORM}
\begin{prop}\label{INT_NORMALISATION}
Let $\mu$ be an $\alpha$-module. The following are equivalent:
\begin{enumerate}\item every element of $\mu$ is integrally closed;
\item every ideal of $\mu$ is integrally closed;
\item $\mu\rightarrow\mu\oplus_\alpha\mathrm{PL}(\alpha)$ is injective;
\item bounded elements of $\alpha$ act cancellatively on $\mu$;
\item $\mu\hookrightarrow\sh L(\mu)$ is bijective onto $\sh L^\nu(\mu)\subseteq\sh L(\mu)$.
\end{enumerate}\end{prop}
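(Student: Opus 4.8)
The plan is to establish the equivalences by a cyclic chain of implications, exploiting the machinery of Section \ref{INT} to reduce several of these conditions to statements about the single homomorphism $\psi:\mu\rightarrow\mu\oplus_\alpha\mathrm{PL}(\alpha)$. First I would observe that conditions \emph{i)} and \emph{ii)} are nearly tautologically equivalent: every principal ideal being integrally closed is the statement \emph{i)}, and since by Proposition \ref{IDEAL_PROPS} integral closure is detected through the map $\psi$ (criterion \emph{iii)} there), and a general ideal is a join of its principal subideals, the integral closure of an arbitrary ideal is controlled by that of its generators. The key input is that $\psi^{-1}\psi$ is a closure operator and that image under $\psi$ commutes with joins.

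**The central equivalence.** The heart of the argument is the equivalence of \emph{ii)}, \emph{iii)}, and \emph{v)}. For \emph{ii)} $\Leftrightarrow$ \emph{iii)}: by criterion \emph{iii)} of Proposition \ref{IDEAL_PROPS}, an ideal $\iota$ is integrally closed exactly when $\psi^{-1}\psi\iota=\iota$, so saying \emph{every} ideal is integrally closed is the assertion that the unit $\iota\rightarrow\psi^{-1}\psi\iota$ is always an isomorphism. Via the bijection $\mu\hookrightarrow\sh L(\mu)$ onto principal ideals, this is precisely the statement that $\psi$ is injective, giving \emph{iii)}, and simultaneously that $\mu\hookrightarrow\sh L^\nu(\mu)$ is the asserted bijection, giving \emph{v)}. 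The remaining piece is to connect \emph{iii)} with the cancellation condition \emph{iv)}. Unwinding condition \emph{iv)} of Proposition \ref{IDEAL_PROPS} specialised to principal ideals, integral closure of all elements says: whenever $X+S\leq Y+S$ for bounded $S$, already $X\leq Y$. I would show this order-theoretic cancellation is equivalent to injectivity of $\psi$ by noting that $\mathrm{PL}(\alpha)$ is the localisation at all bounded elements, so two elements of $\mu$ have the same image in $\mu\oplus_\alpha\mathrm{PL}(\alpha)$ iff they agree after adding some bounded $S$; antisymmetry of the partial order then upgrades the two-sided equality $X+S=Y+S$ into the order-cancellation form stated in \emph{iv)}.

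**Expected obstacle.** The step I expect to require the most care is the precise passage between the \emph{order-theoretic} cancellation statement in \emph{iv)} and the \emph{equality} statement implicit in injectivity of $\psi$. In a semiring the operation $\vee$ induces a partial order via $X\leq Y \Leftrightarrow X\vee Y=Y$, and one must check that cancellativity phrased as an implication on inequalities is genuinely equivalent to cancellativity phrased as injectivity of addition. The subtlety is that $\mu\oplus_\alpha\mathrm{PL}(\alpha)$ identifies $X$ and $Y$ when $X+S=Y+S$ for some bounded $S$, which is a \emph{symmetric} condition, whereas \emph{iv)} is formulated asymmetrically; reconciling these requires using that $\leq$ is a partial order (so two-sided inequality gives equality) together with the fact that bounded localisation preserves the order structure, as recorded in the Tate-condition discussion of the conventions. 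Once this translation is pinned down, the functoriality established in the previous lemma ensures everything is compatible, and the cycle \emph{i)} $\Rightarrow$ \emph{ii)} $\Rightarrow$ \emph{iii)} $\Leftrightarrow$ \emph{iv)} and \emph{iii)} $\Leftrightarrow$ \emph{v)} $\Rightarrow$ \emph{i)} closes.
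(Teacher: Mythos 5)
Your overall architecture is sound and is, in substance, the same as the paper's: every equivalence is extracted from proposition \ref{IDEAL_PROPS} applied to principal ideals, together with the description of $\mathrm{PL}(\alpha)$ as the localisation at the bounded elements (so that $\psi(X)=\psi(Y)$ if and only if $X+S=Y+S$ for some bounded $S$). Your handling of \emph{ii)} $\Leftrightarrow$ \emph{iii)}, of \emph{iii)} $\Leftrightarrow$ \emph{iv)} (including the join trick needed to pass between cancellation of equalities and cancellation of inequalities), and of \emph{v)} is at the same level of rigour as the paper's own proof, which merely wires the implications differently: \emph{i)} $\Rightarrow$ \emph{iii)} $\Rightarrow$ \emph{ii)} $\Rightarrow$ \emph{i)}, with \emph{iv)} and \emph{v)} attached to \emph{i)}.

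There is, however, a genuine gap in your step \emph{i)} $\Rightarrow$ \emph{ii)}. You justify it by writing a general ideal as a join of its principal subideals, with the stated ``key input'' that $\psi^{-1}\psi$ is a closure operator and that image under $\psi$ commutes with joins. That input is insufficient: image under $\psi$ does commute with joins, but \emph{preimage} does not commute with arbitrary joins, and correspondingly a join of integrally closed ideals need not be integrally closed. Concretely, in the semiring $\B[Q;Q^+]$ of theorem \ref{MON_THM} with $\Delta=[-1,1]$, the order ideals generated by $0$ and by $2X$ are each integrally closed (because $Q^+$ consists of \emph{all} integer-slope affine functions $\leq 0$ on $\Delta$), but their join, the order ideal generated by $0\vee 2X$, is not: taking the bounded element $S=0\vee X$ one has $X+S = X\vee 2X \leq 0\vee X\vee 2X\vee 3X = (0\vee 2X)+S$, so $X$ is integral over $0\vee 2X$ by criterion \emph{iii)} of proposition \ref{EXT_PROPS}, while $X\not\leq 0\vee 2X$ in $\B[Q;Q^+]$. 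What saves your argument is that the principal subideals of a fixed ideal form a \emph{directed} family, so the join in question is a filtered union, and the paper records exactly the fact you need: a filtered union of integrally closed ideals is integrally closed, because image and preimage both commute with filtered colimits. With ``join'' replaced by ``filtered union'' and that fact cited, your \emph{i)} $\Rightarrow$ \emph{ii)} is correct. Note that the paper's own wiring sidesteps the issue entirely: it proves \emph{i)} $\Rightarrow$ \emph{iii)} directly (equal images under $\psi$ give equal integral closures, hence equal elements), and then \emph{iii)} $\Rightarrow$ \emph{ii)}, which requires only finite joins of elements inside a single ideal.
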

\begin{proof}Let $X,Y\in\mu$ and suppose that their images in $\mu\oplus_\alpha\mathrm{PL}(\alpha)$ are equal. Then the integral closure of $X$ equals the integral closure of $Y$. If \emph{i)} is satisfied, then this implies $X=Y$. Therefore, \emph{i)} implies \emph{iii)}.

Condition \emph{iii)} implies \emph{ii)}, by part \emph{iii)} of proposition \ref{IDEAL_PROPS}. 

Condition \emph{i)} is a special case of \emph{ii)}, and \emph{v)} is a rephrasing of \emph{i)}.

The equivalence of \emph{iv)} is obtained by applying the inequalities \eqref{equations} (part \emph{iv)} of proposition \ref{IDEAL_PROPS}) for integral closure of elements in both directions.\end{proof}

\begin{defn}An $\alpha$-module $\mu$ is said to be \emph{normal} if it satisfies the equivalent conditions of proposition \ref{INT_NORMALISATION}. A semiring is said to be normal if it is so as a module over itself.

The full subcategory of $\Mod_\alpha$ spanned by the normal modules is denoted $\Mod_\alpha^\nu$.\end{defn}

\begin{cor}\label{CANCELLATIVE}A cancellative semiring is normal.\end{cor}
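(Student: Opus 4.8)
The plan is to match the hypothesis directly against criterion \emph{iv)} of Proposition \ref{INT_NORMALISATION}, applied to $\mu=\alpha$ regarded as a module over itself. First I would unwind what that criterion says in this case: since the action of a semiring on itself is by the operation $+$, the assertion that an element $S\in\alpha$ ``acts cancellatively on $\alpha$'' is precisely the implication $S+X=S+Y\Rightarrow X=Y$ for all $X,Y\in\alpha$. So condition \emph{iv)} demands exactly this implication for every \emph{bounded} $S$.

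Next I would observe that a cancellative semiring is, by definition, one in which this implication holds for \emph{every} $S\in\alpha$ (this is the cancellative law for $+$ highlighted in the introduction, e.g.\ around \eqref{intro2}). In particular it holds for every bounded $S$, so the bounded elements of a cancellative $\alpha$ act cancellatively on $\alpha$. This is condition \emph{iv)}, and since that condition is one of the equivalent characterisations of normality in Proposition \ref{INT_NORMALISATION}, I would conclude that $\alpha$ is normal.

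There is no genuine obstacle here: all the real work has already been packaged into the equivalences of Proposition \ref{INT_NORMALISATION}, and the only thing to verify is the trivial specialisation from cancellation by all elements to cancellation by the bounded ones. The single point worth flagging is a bookkeeping one under the max-plus conventions fixed in the introduction, namely that the cancellative operation at issue is $+$ (the semiring ``multiplication'') and not $\vee$; once that is noted, the corollary is immediate.
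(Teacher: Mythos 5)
Your proof is correct and is exactly the paper's argument: the paper also proves this corollary by invoking criterion \emph{iv)} of Proposition \ref{INT_NORMALISATION}, with $\alpha$ viewed as a module over itself. Your write-up merely spells out the (correct) observation that cancellation by all elements implies cancellation by the bounded ones, which the paper leaves implicit.
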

\begin{proof}By criterion \emph{iv)} of the proposition.\end{proof}

\begin{cor}Any module over a semifield is normal.\end{cor}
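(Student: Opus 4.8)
The plan is to verify criterion \emph{iv)} of Proposition \ref{INT_NORMALISATION}, namely that every bounded element of $\alpha$ acts cancellatively on $\mu$. The whole content of the corollary is then the observation that over a semifield the bounded elements are as well-behaved as possible, so this criterion is automatic.

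First I would pin down which elements of a semifield $\alpha$ are bounded. By the conventions surrounding the Tate condition, an element is bounded when it is bounded both above and below by invertible elements, and in particular every invertible element is bounded. Conversely, if $S$ is bounded then $S$ dominates some invertible $u$, and since $u\neq-\infty$ we get $S\neq-\infty$; but in a semifield every element other than $-\infty$ is invertible. Hence the bounded elements of a semifield are precisely the invertible ones. It then remains to note that an invertible element $S\in\alpha$, with additive inverse $-S$, acts \emph{invertibly} on any module $\mu$: the actions of $S$ and of $-S$ are mutually inverse bijections of $\mu$. An invertible self-map is in particular injective, hence cancellative, so criterion \emph{iv)} is satisfied and $\mu$ is normal.

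I do not expect any real obstacle; the only point requiring care is the identification of bounded with invertible elements, which is immediate from the semifield axiom once the Tate conventions are unwound. For the record, one could argue just as quickly through criterion \emph{iii)}: because the bounded elements of a semifield are already invertible, the bounded localisation $\alpha\rightarrow\mathrm{PL}(\alpha)$ is an isomorphism, so $\mu\oplus_\alpha\mathrm{PL}(\alpha)\cong\mu$ and the structure map $\mu\rightarrow\mu\oplus_\alpha\mathrm{PL}(\alpha)$ is the identity, trivially injective.
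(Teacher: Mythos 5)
Your proof is correct and is exactly the argument the paper intends: the corollary is stated without proof because, just as for Corollary \ref{CANCELLATIVE}, it follows immediately from criterion \emph{iv)} of Proposition \ref{INT_NORMALISATION} once one notes that in a semifield the bounded elements are precisely the invertible ones, which act bijectively (hence cancellatively) on any module. Your alternative remark via criterion \emph{iii)}, that $\mathrm{PL}(\alpha)\cong\alpha$ so the structure map is the identity, is an equally valid one-line variant of the same observation.
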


Let $\mu$ be any $\alpha$-module. From condition \emph{iii)} of proposition \ref{INT_NORMALISATION}, one can see that the image of $\mu$ in $\mu\oplus_\alpha\mathrm{PL}(\alpha)$ is normal, and that it is initial among normal $\alpha$-modules under $\mu$. Thus it is a reasonable \emph{normalisation} $\mu^\nu$ of $\mu$. Note that the map $\mu\rightarrow \mu^\nu$ is \emph{surjective}.

Alternatively, by \emph{v)}, it is the image of the composite
\[ \mu \rightarrow \sh L(\mu) \rightarrow \sh L^\nu(\mu), \]
where the second arrow is the integral closure operator of \S\ref{INT}.

Normalisation is left adjoint to the inclusion of $\Mod_\alpha^\nu$ into $\Mod_\alpha$; in other words, the former is a reflective subcategory of the latter.

\paragraph{Limits and colimits} Since $\Mod_\alpha^\nu$ is a reflective subcategory of $\Mod_\alpha$,
\begin{itemize}\item any limit of normal modules is normal;
\item in particular, if $\alpha$ is normal, then so is any finite free $\alpha$-module;
\item normalisation commutes with colimits.\end{itemize}
Since filtered colimits are constructed in the category of sets, by item \emph{iv)} of proposition \ref{INT_NORMALISATION},
\begin{itemize}\item a filtered colimit of normal modules is normal.\end{itemize}
A coequaliser of normal modules needn't be normal; cf.\ example \ref{INT_EGS}.

\paragraph{Tensor sum}
The tensor sum of two normal modules need not be normal (cf.\ example \ref{INT_EGS}); to get a monoidal structure on $\Mod^\nu$, then, it must be normalised.

\begin{defn}The \emph{normalised tensor sum} of two modules is the normalisation of their tensor sum.\end{defn}

By the sum property of integral extensions, the normalised tensor sum of two modules equals the normalised tensor sum of their normalisations. Hence, the reflection functor $\Mod_\alpha\rightarrow\Mod_\alpha^\nu$ is strongly monoidal with respect to this structure. 

\paragraph{Change of semiring}
Let $\alpha\rightarrow\beta$ be a semiring homomorphism. 
\begin{itemize}\item The normalised tensor sum allows us to define a \emph{normalised base change}, making the diagram
\[\xymatrix{ \Mod_\alpha\ar[r]\ar[d] & \Mod_\beta\ar[d] \\ \Mod_\alpha^\nu\ar[r] & \Mod_\beta^\nu }\]
commute. \hfill\emph{scalar extension}

\item Conversely, if $\mu$ is a normal $\beta$-module, then $\mu$ is normal as an $\alpha$-module by part \emph{iv)} of proposition \ref{INT_NORMALISATION}. That is, the square
\[\xymatrix{ \Mod^\nu_\beta\ar[r]\ar[d] & \Mod^\nu_\alpha\ar[d] \\ \Mod_\beta \ar[r] & \Mod_\alpha }\]
commutes. \hfill\emph{scalar restriction}

\item Any bounded localisation of a normal module is normal (over either the original semiring or the corresponding localisation). \hfill\emph{localisation}\end{itemize}

Note also that, since $\alpha\rightarrow\alpha^\nu$ is surjective, $\alpha^\nu$ has a natural structure of a semiring, and the category of $\alpha^\nu$-modules is a full subcategory of $\Mod_\alpha$. Any normal module is an $\alpha^\nu$-module (but not conversely, cf.\ \ref{INT_EGS}).

\paragraph{Monics and epics}
Because $\Mod^\nu$ is a reflective subcategory of $\Mod$, a morphism is monic in $\Mod^\nu$ if and only if it is injective. Similarly, a morphism is epic if and only if it is surjective - this follows from the fact that the unit of the adjunction is surjective.

Moreover:
\begin{itemize}\item By the contraction property of integrally closed ideals, any submodule of a normal module is normal.\hfill \emph{hereditary}
\item Normalisation preserves monomorphisms. Indeed, since $\mathrm{PL}(\alpha)$ is a localisation of $\alpha$, it is flat and so $\mu_1\oplus_\alpha\mathrm{PL}(\alpha)\rightarrow\mu_2\oplus_\alpha\mathrm{PL}(\alpha)$ is injective. The claim then follows from the commutativity of \[\xymatrix{ \mu_1^\nu\ar[r]\ar[d] & \mu_2^\nu\ar[d] \\ \mu_1\oplus_\alpha\mathrm{PL}(\alpha)\ar[r] & \mu_2\oplus_\alpha\mathrm{PL}(\alpha) }\]
\hfill \emph{mono-flat}\end{itemize}

\

In summary:

\begin{prop}[Properties of the category of normal modules]The category of normal modules is a reflective subcategory of the category of all modules, stable under filtered colimits and scalar restriction. Any submodule of a normal module is normal.

The normalisation functor is strongly monoidal and commutes with base change. It preserves monomorphisms.\end{prop}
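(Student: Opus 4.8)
The plan is to assemble this statement from the structural facts already recorded in \S\ref{INT} and \S\ref{NORM}; none of the individual parts is deep, so the work lies in organising the citations and isolating the two genuinely non-formal inputs, namely the sum property of integral extensions and the flatness of $\mathrm{PL}(\alpha)$. I would begin with reflectivity. Take $\mu^\nu$ to be the image of $\mu$ in $\mu\oplus_\alpha\mathrm{PL}(\alpha)$, as in the discussion following proposition \ref{INT_NORMALISATION}. To see $\mu^\nu$ is normal I would verify criterion \emph{iii)} of that proposition: since $\mathrm{PL}(\alpha)$ is flat over $\alpha$ and $\mathrm{PL}(\alpha)\oplus_\alpha\mathrm{PL}(\alpha)=\mathrm{PL}(\alpha)$, the canonical map $\mu^\nu\oplus_\alpha\mathrm{PL}(\alpha)\rightarrow\mu\oplus_\alpha\mathrm{PL}(\alpha)$ is injective and its composite with $\mu^\nu\rightarrow\mu^\nu\oplus_\alpha\mathrm{PL}(\alpha)$ is the inclusion, forcing the latter to be injective. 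For the universal property, given $\mu\rightarrow\nu$ with $\nu$ normal, I would apply $-\oplus_\alpha\mathrm{PL}(\alpha)$ and use injectivity of $\nu\rightarrow\nu\oplus_\alpha\mathrm{PL}(\alpha)$ to factor the map uniquely through the surjection $\mu\twoheadrightarrow\mu^\nu$. This exhibits normalisation as left adjoint to the inclusion, hence the reflectivity.

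Next I would dispatch the stability properties, each by quoting a single criterion of proposition \ref{INT_NORMALISATION}. For filtered colimits: these are computed in sets, and the cancellation condition \emph{iv)} passes to a filtered colimit of the underlying sets, so the colimit is normal. For scalar restriction: criterion \emph{iv)} refers only to the action of bounded elements of the base, which is unaffected by restriction along $\alpha\rightarrow\beta$, so a normal $\beta$-module is normal over $\alpha$. The hereditary statement — every submodule of a normal module is normal — is exactly the corollary to the functoriality lemma, obtained from the contraction property applied to an inclusion $\mu_1\subseteq\mu_2$.

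Finally, the monoidal and exactness assertions. Strong monoidality is the claim that the normalised tensor sum of $\mu_1,\mu_2$ depends only on $\mu_1^\nu,\mu_2^\nu$; this is precisely the sum property of the functoriality lemma, which says the integral closure of $\iota_1+\iota_2$ in $\mu_1\oplus_\alpha\mu_2$ is integral over $\iota_1^\nu+\iota_2^\nu$. Compatibility with base change then follows formally, since the normalised base change is a normalised tensor sum and normalisation is strongly monoidal. For preservation of monomorphisms I would again invoke flatness of $\mathrm{PL}(\alpha)$: a monomorphism $\mu_1\hookrightarrow\mu_2$ remains injective after $-\oplus_\alpha\mathrm{PL}(\alpha)$, and the commuting square relating each $\mu_i^\nu$ to $\mu_i\oplus_\alpha\mathrm{PL}(\alpha)$ then forces $\mu_1^\nu\rightarrow\mu_2^\nu$ to be injective.

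The main obstacle I expect is not in any of the adjunction bookkeeping but in the two inputs that are genuinely semiring-theoretic rather than categorical: establishing the sum property at the level of integral extensions, which rests on the compatibility of bounded localisation with the tensor sum $\oplus_\alpha$, and the flatness of $\mathrm{PL}(\alpha)$ used for mono-preservation and for the reflectivity argument above. Both have already been secured in the earlier development, so in this summary they enter only as citations; once they are granted, every remaining step is a formal consequence of the reflective adjunction and the functoriality lemma.
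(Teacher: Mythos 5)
Your proposal is correct and follows essentially the same route as the paper: the proposition is a summary of the preceding paragraphs of \S\ref{NORM}, and you reassemble exactly those ingredients — criterion \emph{iii)}/\emph{iv)} of proposition \ref{INT_NORMALISATION} for reflectivity, filtered colimits, and scalar restriction; the contraction property for heredity; the sum property for strong monoidality and base change; and flatness of $\mathrm{PL}(\alpha)$ for preservation of monomorphisms. Your spelled-out verification of the universal property of $\mu^\nu$ is slightly more explicit than the paper's, but it is the same argument.
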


\begin{egs}\label{INT_EGS}Let $\alpha$ be the semiring of convex, piecewise-affine functions on an interval with co-ordinate $X$. Since $\alpha$ is cancellative, it is normal (corollary \ref{CANCELLATIVE}). 

Suppose that $X=0$ in the interior of the interval, and let $\mu$ be the $\alpha$-module with presentation
\[ \mu:=\alpha^2/((X\vee0,-\infty)=(-\infty,X\vee0)). \]
Neither basis vector of $\mu$ is integrally closed, and the integral closure of either is the whole of $\mu$. Thus the normalisation of $\mu$ is free of rank one. However, $\alpha^2$ is finite free and hence normal. Thus, a coequaliser of normal modules may be abnormal.

Continuing with this example, let $\alpha\rightrightarrows\mu$ be the inclusions of the basis elements. The equaliser of this pair is the semiring ideal generated by $X\vee 0$. However, after normalisation, both maps become equal. Thus normalisation is not left exact: it does not preserve equalisers.

Finally, although $\alpha$ is normal, the un-normalised tensor double $\alpha\oplus_{\R_\vee}\alpha$ is not; it suffers from the same anomaly \eqref{intro1}
\begin{align}\nonumber (2X\oplus 0\vee 0\oplus 2X) + (X\oplus 0\vee 0\oplus X) &= 3(X\oplus 0 \vee0\oplus X) \\
\nonumber\text{while}\qquad 2X\oplus 0 \vee0\oplus 2X & \neq 2(X\oplus0\vee0\oplus X) \end{align}
that we saw in the introduction.\end{egs}

\section{Normal semirings}\label{INT_SEMIRINGS}

Arithmetic in normal semirings enjoys a few simplifications:

\begin{lemma}\label{SEMIRINGS_LEMMA}Let $\alpha$ be a normal semiring, $X,Y\in\alpha$. Suppose $Y$ is bounded. Then:\begin{align}\label{the_implication} n(X\vee Y) \leq (n-1)(X\vee Y) + Y \quad &\Rightarrow \quad X\leq Y & \textit{(reduction)} \\
n(X\vee Y) \quad &=\quad nX\vee nY  & \textit{(binomial)} \\
\label{divisible} nX\leq nY \quad&\Rightarrow\quad X\leq Y &\textit{(divisibility)}
 \end{align}\end{lemma}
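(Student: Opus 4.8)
The plan is to distill normality into a single cancellation principle and invoke it three times. The crucial preliminary is that $X\vee Y$ is \emph{bounded}: since $Y$ is bounded it dominates an invertible element, hence so does $X\vee Y$, while the Tate condition supplies an invertible upper bound; consequently every multiple $m(X\vee Y)$ is bounded as well. By criterion \emph{iv)} of Proposition \ref{INT_NORMALISATION}, bounded elements act cancellatively, so for bounded $S$ I get the implication $A+S\le B+S\Rightarrow A\le B$ (rewrite $A+S\le B+S$ as $(A\vee B)+S=B+S$ using distributivity, then cancel the injective translation). Each of the three assertions will come down to exhibiting such an inequality with $S$ a suitable multiple of $X\vee Y$.

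For \textbf{(reduction)} I would simply rewrite both sides of the hypothesis so as to expose the common summand $(n-1)(X\vee Y)$: the left side is $(X\vee Y)+(n-1)(X\vee Y)$ and the right side is $Y+(n-1)(X\vee Y)$. Cancelling the bounded element $(n-1)(X\vee Y)$ yields $X\vee Y\le Y$, that is, $X\le Y$.

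For \textbf{(binomial)} only the inequality $n(X\vee Y)\le nX\vee nY$ needs proof, the reverse being automatic. Here I would add $(n-1)(X\vee Y)$ to both sides. The left side becomes $(2n-1)(X\vee Y)=\bigvee_{i=0}^{2n-1}\bigl(iX+(2n-1-i)Y\bigr)$, and I claim each monomial $iX+(2n-1-i)Y$ is bounded by $(nX\vee nY)+(n-1)(X\vee Y)$: if $i\ge n$ factor out $nX$, leaving a monomial of total degree $n-1$, which is $\le(n-1)(X\vee Y)$; if $i\le n-1$ factor out $nY$ instead. Taking the join over $i$ gives $(2n-1)(X\vee Y)\le(nX\vee nY)+(n-1)(X\vee Y)$, and cancelling the bounded summand $(n-1)(X\vee Y)$ finishes it. This is the step I expect to be the main obstacle: no single manipulation is hard, but the argument hinges on choosing the right bounded multiplier so that the interior mixed monomials get absorbed, and on the combinatorial check that $nX$ and $nY$ together with the padding $(n-1)(X\vee Y)$ recover every monomial of degree $2n-1$.

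For \textbf{(divisibility)} I would deduce it from \textbf{(reduction)}. Given $nX\le nY$, observe that $(n-1)(X\vee Y)+Y\ge(n-1)Y+Y=nY\ge nX$, so this element dominates $nX$; since it also equals $\bigvee_{i=0}^{n-1}(iX+(n-i)Y)$, it already contains every monomial of $n(X\vee Y)$ except $nX$, which we have just dominated. Hence $n(X\vee Y)\le(n-1)(X\vee Y)+Y$, and \textbf{(reduction)} gives $X\le Y$.
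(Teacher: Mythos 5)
Your proposal is correct and follows essentially the same route as the paper: all three statements are reduced to cancellation of the bounded element $(n-1)(X\vee Y)$ (criterion \emph{iv)} of proposition \ref{INT_NORMALISATION}), combined with the free-semiring expansion of powers of $X\vee Y$. The only cosmetic differences are that the paper pads the binomial identity with $n(X\vee Y)$ (via $2n(X\vee Y)=(nX\vee nY)+n(X\vee Y)$) where you use $(n-1)(X\vee Y)$, and that the paper deduces divisibility by routing through the binomial identity where you compare monomials directly; both variants are valid.
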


The reader will note that each of these statements holds for a semiring of convex, piecewise-affine functions, but fails for a free semiring (see the introduction). For reduction \eqref{the_implication}, compare the \emph{reduction criterion} of \cite[\S1.2]{SwansonHuneke}.

\begin{proof}The first implication is by cancellation of the bounded element $(n-1)(X\vee Y)$. For the binomial: 
\begin{align} \nonumber n(X\vee Y) &= 2n(X\vee Y) - n(X\vee Y) = (nX\vee nY) + n(X\vee Y) - n(X\vee Y)  \\\nonumber &= nX\vee nY.\end{align}
Finally, by combining reduction with the binomial identity, $nX\leq nY$ implies that $n(X\vee Y)=nY\leq (n-1)(X\vee Y)+Y$.\end{proof}

The reduction and divisibility implications may alternatively be interpreted as follows: if $\alpha$ is any semiring - not necessarily normal - then the inequality on the left implies the inequality on the right for the \emph{integral closures} of $X$ and $Y$. If, in particular, $Y\leq X$, then the inequality on the left implies that $X$ is an integral extension of $Y$.

The reduction inequality may be rewritten
\[ nX \leq \bigvee_{i=0}^{n-1}iX + (n-i)Y, \]
which may be expressed by saying that $X$ obeys an `equation of integral dependence' over $Y$; compare the definition of integral dependence in commutative algebra.

In particular, when $Y=0$,
\[ nX\leq \bigvee_{i=0}^{n-1}iX \quad \Rightarrow \quad X\leq 0, \]
and is equivalent to the stipulation that $0$ be integrally closed in $\alpha$. The condition that the additive identity be integrally closed is particularly significant for its interpretations in commutative algebra and monoid theory (cf. aside \ref{ALG_RMK}, \ref{INT_MON}).

\

More generally, one can transform the question of integral closure of an arbitrary element $X\in\alpha$ into the corresponding question for an additive identity.

\begin{lemma}\label{INT_ZERO0}The following are equivalent for any semiring $\alpha$:
\begin{enumerate}\item bounded elements of $\alpha$ are integrally closed;
\item for any bounded localisation $\alpha\rightarrow\alpha[-S]$, $0$ is integrally closed in $\alpha[-S]$.\end{enumerate}\end{lemma}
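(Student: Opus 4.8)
The plan is to exploit the principle that, after localising at a bounded element, that element becomes invertible, and that \emph{translation} by an invertible element is an isomorphism of modules carrying the ideal generated by $0$ onto the ideal generated by that element. Since integral closure is functorial, such an isomorphism preserves integral closedness of ideals; this is the device that trades the integral closure of a general bounded element for the integral closure of the additive identity. Both implications will run through this translation together with the persistence and contraction properties of $\sh L^\nu$.

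For \emph{i)} $\Rightarrow$ \emph{ii)}, I would first note that $0$ is invertible, hence bounded, so by \emph{i)} the ideal $\alpha^\circ$ it generates is integrally closed in $\alpha$. Fix a bounded localisation $\phi\colon\alpha\to\alpha[-S]$. By the first consequence recorded after the definition of integral closure — the image of an integrally closed ideal under a bounded localisation is again integrally closed — the image $\phi(\alpha^\circ)$ is integrally closed in $\alpha[-S]$. A short check (using $\phi(0)=0$ and monotonicity) identifies this image with $\alpha[-S]^\circ$, the ideal generated by $0$ in $\alpha[-S]$, which is exactly \emph{ii)}.

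For \emph{ii)} $\Rightarrow$ \emph{i)}, let $Y\in\alpha$ be bounded and localise at $Y$, writing $\phi\colon\alpha\to\alpha[-Y]$. Here $\phi(Y)$ is invertible, so the translation $\tau\colon z\mapsto z+\phi(Y)$ is an automorphism of the $\alpha[-Y]$-module $\alpha[-Y]$ (it is additive in the module action, preserves $\vee$ and $-\infty$, and has inverse $z\mapsto z-\phi(Y)$), taking the ideal generated by $0$ onto the ideal generated by $\phi(Y)$. Hypothesis \emph{ii)} makes the former integrally closed, so $\tau$ transports this to the latter. The contraction property then shows that the preimage $\phi^{-1}\iota_{\phi(Y)}$ is integrally closed in $\alpha$, and it remains to identify this preimage with $\iota_Y$, the ideal generated by $Y$, thereby exhibiting $Y$ as integrally closed.

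The main obstacle is precisely this last identification: a priori the contraction only yields that the $Y$-saturation $\phi^{-1}\phi\,\iota_Y$ is integrally closed, and one must show this saturation does not enlarge $\iota_Y$. I expect to close the gap by feeding the hypothesis back through criterion \emph{ii)} of Proposition \ref{IDEAL_PROPS}, rephrased via the reduction inequality of Lemma \ref{SEMIRINGS_LEMMA}: an element of the $Y$-saturation satisfies an equation of integral dependence over $Y$, and running the translation argument uniformly across all bounded localisations — equivalently, working in $\mathrm{PL}(\alpha)$, where every bounded element is simultaneously invertible, so that $\psi\colon\alpha\to\mathrm{PL}(\alpha)$ detects integral closedness by condition \emph{iii)} of the proposition — should force such an element back into $\iota_Y$. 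This is the step where the full quantification over $S$ in \emph{ii)}, rather than the single localisation at $Y$, must genuinely be used.
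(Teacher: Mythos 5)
Your direction \emph{i)} $\Rightarrow$ \emph{ii)} is correct, and it is in substance the paper's argument: the paper translates the (integrally closed) ideal generated by $n\phi(S)$ back to the ideal generated by $0$, while you apply the stability of integral closedness under bounded localisation directly to the ideal $\alpha^\circ$ generated by $0$; both rest on the same two bullets (stability of closedness under localisation, and invariance under the translation automorphism $z\mapsto z\pm\phi(S)$).

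The gap you flag in \emph{ii)} $\Rightarrow$ \emph{i)} is genuine, and the strategy you propose for closing it cannot work, because it is circular. Write $\iota_Y$ for the order ideal generated by a bounded $Y$, $\phi\colon\alpha\to\alpha[-Y]$ for the localisation at $Y$, and $\sigma:=\phi^{-1}\iota_{\phi(Y)}=\{z\in\alpha:\exists m,\ z+mY\leq(m+1)Y\}$ for the $Y$-saturation. Your translation-plus-contraction argument correctly shows that $\sigma$ is integrally closed; moreover every element of $\sigma$ is integral over $\iota_Y$ (criterion \emph{iii)} of proposition \ref{EXT_PROPS} with the bounded element $mY$), so $\sigma$ is precisely the integral closure of $\iota_Y$. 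Consequently, under hypothesis \emph{ii)} the missing identification ``$\sigma=\iota_Y$'' is not a technical lemma but is \emph{equivalent} to the assertion ``$\iota_Y$ is integrally closed'' that you are proving: one direction is what you want, and the other is criterion \emph{ii)} of proposition \ref{IDEAL_PROPS} applied to the single localisation $\phi$. This is why each proposed repair loops back on itself. Quantifying over all bounded $S$, or passing to $\mathrm{PL}(\alpha)$, only shows that every saturation $\phi_S^{-1}\phi_S\,\iota_Y$ (and likewise $\psi^{-1}\psi\,\iota_Y$) coincides with $\sigma$: indeed, if $\phi_S(z)\leq\phi_S(Y)$ then the image of $\phi(z)-\phi(Y)$ in $\alpha[-(S+Y)]$ is $\leq 0$, so hypothesis \emph{ii)} applied in $\alpha[-Y]$ gives $z\in\sigma$ --- and you are back to asking whether $\sigma=\iota_Y$. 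Finally, the reduction implication \eqref{the_implication} of lemma \ref{SEMIRINGS_LEMMA} is proved in the paper only for \emph{normal} semirings (its proof is by cancellation of a bounded element), so invoking it here assumes the conclusion.

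You should know that the paper's own proof stalls silently at exactly the same point: its opening assertion, that ``the order ideal generated by $nX$ is the preimage of $0$ under $\alpha\stackrel{-nX}{\rightarrow}\alpha[-X]$'', is word for word the identification $\sigma=\iota_{nX}$, and no justification is offered. So you have effectively reconstructed the paper's proof, unproven step included; the difference is that you flag the step honestly. To complete the lemma one needs a genuinely new argument (or a counterexample) for the implication: hypothesis \emph{ii)} forces $z+mY\leq(m+1)Y\Rightarrow z\leq Y$ for bounded $Y$. Nothing in sections \ref{INT}--\ref{INT_SEMIRINGS} appears to yield this, since hypothesis \emph{ii)} only controls ideals generated by invertible elements (translates of $\downarrow 0$), whereas the statement concerns the ideal of a non-invertible bounded element.
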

\begin{proof}The order ideal generated by $nX$ is the preimage of $0$ under the map
\[ \alpha\stackrel{-nX}{\rightarrow}\alpha[-X] \]
of $\alpha$-modules. By the scalar restriction and contraction properties, if $0$ is integrally closed in $\alpha[-X]$ (as a module over itself), then $nX$ is integrally closed in $\alpha$.

If $X$ is bounded, then by stability of integral closure for bounded localisations, the converse is true.\end{proof}

\begin{remark}The above argument is inspired by a situation in commutative algebra, described below in the aside \ref{ALG_RMK}. To emphasise this parallel, it can be rephrased in terms of a \emph{Rees algebra}
\[ \alpha^\circ[X+T]\subseteq \alpha[T], \]
the graded subring of the free $\alpha$-algebra consisting joins of expressions $F_n + nT$ with $F_n\leq nX\in\alpha$. The $n$th graded piece is isomorphic with the slice set $\alpha_{\leq nX}$. In particular, the zeroth term is the semiring of integers $\alpha^\circ$ of $\alpha$. It is an order ideal in $\alpha[T]$.

If we define integral closure of a homogeneous order ideal in $\alpha[T]$ to be integral closure in $\alpha$ of each of the graded pieces, then our argument is to apply a \emph{contraction property} to the quotient $\alpha[X+T]\twoheadrightarrow \alpha[-X]$. This statement is the semiring analogue of the fact that a blow-up of a normal scheme a is normal if and only if all large powers of the ideal being blown up are integrally closed (cf.\ \cite[Prop. 5.2.1]{SwansonHuneke} and aside \ref{ALG_RMK}).
\end{remark}

Under a certain `approximation' hypothesis, detailed in the appendix \ref{append} - satisfied, for example, when every element of $\alpha\setminus\{-\infty\}$ is bounded, or when $\alpha^\circ$ is $T$-adically complete with respect to some $T$ invertible in $\alpha$ - checking integral closure of all bounded elements is enough for normality of the entire semiring.

\begin{lemma}\label{INT_APPROX_NORMAL}Suppose that $\alpha$ satisfies the bounded approximation property (def.\ \ref{APPEND_DEF}). If all bounded elements of $\alpha$ are normal, then $\alpha$ itself is normal.\end{lemma}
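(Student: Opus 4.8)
The plan is to verify condition i) of Proposition \ref{INT_NORMALISATION}, namely that every element $X\in\alpha$ is integrally closed. By criterion iii) of Proposition \ref{IDEAL_PROPS}, applied to the principal ideal generated by $X$, this amounts to checking that for every $Y\in\alpha$ with $\psi(Y)\leq\psi(X)$ in $\mathrm{PL}(\alpha)$ one already has $Y\leq X$, where $\psi:\alpha\rightarrow\mathrm{PL}(\alpha)$ is the bounded difference map. The hypothesis that all bounded elements are integrally closed supplies exactly this implication whenever the upper bound $X$ is bounded, so the entire content of the lemma is to promote it from bounded $X$ to arbitrary $X$.

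First I would observe that, by the Tate condition, every $X$ is automatically bounded above by an invertible element, so the only failure of boundedness is from below. The bounded approximation property (Definition \ref{APPEND_DEF}) is then invoked to produce a family of \emph{bounded} elements $X_n\geq X$ approximating $X$ from above — concretely, one expects the approximants $X\vee c_n$ for suitable invertible $c_n$, the decreasing family converging to $X$ being precisely what the completeness built into the approximation property guarantees.

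Granting such approximants, the argument is short. Suppose $\psi(Y)\leq\psi(X)$. Since $\psi$ is a monotone homomorphism and $X\leq X_n$, we obtain $\psi(Y)\leq\psi(X_n)$ for every $n$. Each $X_n$ is bounded, hence integrally closed by hypothesis, so criterion iii) of Proposition \ref{IDEAL_PROPS} yields $Y\leq X_n$ for all $n$. It then remains only to pass to the limit and conclude $Y\leq X$, which establishes that $X$ is integrally closed and, $X$ being arbitrary, that $\alpha$ is normal.

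This last passage to the limit is the main obstacle. It requires that the order relation be closed under the approximation furnished by Definition \ref{APPEND_DEF}: that $Y\leq X_n$ for all $n$, together with $X_n\to X$, forces $Y\leq X$; equivalently, that $X$ is the infimum of the $X_n$ in a form respected by the completeness hypothesis. In the first example, where every element other than $-\infty$ is already bounded, the approximants may be taken constant and this step is vacuous; in the $T$-adically complete case it is exactly completeness that collapses the family of inequalities $Y\leq X_n$ into the single inequality $Y\leq X$. Checking that the abstract bounded approximation property delivers this closedness is where the real work sits, and is presumably why it is isolated as a hypothesis; the remaining steps are formal consequences of the results of \S\ref{INT} and \S\ref{NORM}.
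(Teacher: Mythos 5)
Your skeleton is in fact the paper's own: approximate $X$ from above by joins with bounded elements, use the hypothesis to see those joins are integrally closed, and finish with the approximation property. (The paper runs this through criterion \emph{ii)} of proposition \ref{IDEAL_PROPS}, one bounded localisation $f:\alpha\rightarrow\alpha[-S]$ at a time, where you use criterion \emph{iii)} and $\mathrm{PL}(\alpha)$; these are interchangeable.) But there is a genuine gap exactly where you concede one: the implication ``$Y\leq X_n$ for all $n$ implies $Y\leq X$'' is left unproved and described as a passage to the limit whose justification is ``where the real work sits.'' This rests on a misreading of definition \ref{APPEND_DEF}. The bounded approximation property is not a completeness or convergence axiom, and no limit is ever taken: it says, verbatim, that two order ideals whose joins with \emph{every} bounded element coincide are already equal. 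A secondary defect follows from the same misreading: you take as approximants a \emph{sequence} $X\vee c_n$ with $c_n$ invertible, but the property quantifies over all bounded elements, and nothing guarantees such a sequence is cofinal among them; the correct family is $X\vee S$ for \emph{all} bounded $S$.

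With the definition read correctly, the step you flag as the obstacle is immediate. Fix $Y$ with $\psi(Y)\leq\psi(X)$. For every bounded $S$, the element $X\vee S$ is bounded, hence integrally closed by hypothesis, and $\psi(Y)\leq\psi(X)\leq\psi(X\vee S)$ together with criterion \emph{iii)} of proposition \ref{IDEAL_PROPS} gives $Y\leq X\vee S$. Now let $\iota_1$ be the order ideal generated by $X$ and $\iota_2$ the order ideal generated by $X\vee Y$. Since $Y\leq X\vee S$ forces $X\vee Y\vee S=X\vee S$, we get
\[ \iota_1\vee S=\iota_2\vee S \quad\text{for every bounded } S\in\alpha, \]
that is, $\iota_1$ and $\iota_2$ are equal away from infinity. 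The bounded approximation property then yields $\iota_1=\iota_2$, i.e.\ $Y\leq X$. Thus $X$ is integrally closed, and since $X$ was arbitrary, $\alpha$ is normal by proposition \ref{INT_NORMALISATION}. Nothing about infima, closedness of the order relation, or $T$-adic completeness is needed; completeness appears in the appendix only as a \emph{sufficient condition} for the approximation property, not as an ingredient of this lemma.
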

\begin{proof}Let $X\in\alpha$, and let $f:\alpha\rightarrow\alpha[-S]$ be a bounded localisation. Since $S$ is bounded, so is $X\vee S$, and so by hypothesis, $X\vee S$ is integrally closed:
\[ X\vee S\leq (f^{-1}fX) \vee S \leq f^{-1}f(X\vee S) =X\vee S \]
i.e.\ $X\vee S=(f^{-1}fX)\vee S$ for all bounded $S$. The approximation property then implies that $f^{-1}fX=X$. Thus $X$ is integrally closed.\end{proof}

\begin{prop}\label{INT_ZERO}Suppose that $\alpha$ has the bounded approximation property. The following are equivalent:
\begin{enumerate}\item $\alpha$ is normal;
\item for any bounded localisation $\alpha\rightarrow\alpha[-S]$, $0$ is integrally closed in $\alpha[-S]$.\end{enumerate}\end{prop}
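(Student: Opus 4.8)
The plan is to recognise this proposition as a direct repackaging of the two preceding lemmas, with the bounded approximation property supplying the only genuinely new ingredient. The observation that unlocks everything is that condition \emph{ii)} of this proposition is \emph{verbatim} condition \emph{2)} of lemma \ref{INT_ZERO0}. Accordingly, I would route both implications through the intermediate statement that all bounded elements of $\alpha$ are integrally closed, which is precisely condition \emph{1)} of that same lemma.

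For the implication \emph{i)} $\Rightarrow$ \emph{ii)}, I would argue as follows. If $\alpha$ is normal then, by the definition of normality (condition \emph{i)} of proposition \ref{INT_NORMALISATION}), every element of $\alpha$ is integrally closed; in particular every bounded element is. This is exactly condition \emph{1)} of lemma \ref{INT_ZERO0}, and that lemma asserts its equivalence with condition \emph{2)}, which is our \emph{ii)}. Note that this direction uses no approximation hypothesis whatsoever. For the converse \emph{ii)} $\Rightarrow$ \emph{i)}, I would read condition \emph{ii)} as condition \emph{2)} of lemma \ref{INT_ZERO0}, apply that lemma to extract its condition \emph{1)} — that all bounded elements of $\alpha$ are integrally closed — and then feed this into lemma \ref{INT_APPROX_NORMAL}. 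The hypothesis of that lemma is precisely that all bounded elements be integrally closed, and its conclusion, available because $\alpha$ has the bounded approximation property, is that $\alpha$ is normal. This closes the cycle.

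Since both implications are immediate consequences of results established earlier in the section, I do not expect any serious obstacle in the proof itself: all of the real work has already been discharged in lemmas \ref{INT_ZERO0} and \ref{INT_APPROX_NORMAL}. The one point deserving a moment's care is to confirm that the hypothesis of lemma \ref{INT_APPROX_NORMAL} — phrased there as ``all bounded elements are normal'' — is synonymous with ``all bounded elements are integrally closed'', as one reads off from the proof of that lemma, where the property actually invoked is that each $X \vee S$ be integrally closed. Once this identification is made explicit, the proposition follows by concatenating the two equivalences.
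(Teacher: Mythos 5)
Your proposal is correct and coincides with the paper's own proof, which is literally ``by combining lemmas \ref{INT_ZERO0} and \ref{INT_APPROX_NORMAL}'' — exactly the decomposition you describe, including the observation that condition \emph{ii)} is verbatim condition \emph{2)} of lemma \ref{INT_ZERO0}. Your parenthetical care in identifying ``normal'' with ``integrally closed'' for the hypothesis of lemma \ref{INT_APPROX_NORMAL} is a reasonable clarification of the paper's terminology, but nothing more is needed.
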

\begin{proof}By combining lemmas \ref{INT_ZERO0} and \ref{INT_APPROX_NORMAL}.\end{proof}

\section{Commutative algebra}\label{INT_ALG}

Let $A^+$ be a commutative ring, $A$ a localisation of $A^+$. Assume that $A^+\rightarrow A$ is injective. Let $M$ be an $A$-module. A \emph{fractional submodule} of $M$ is a finitely generated $A^+$-submodule.

More generally, if $X^+$ is a (quasi-compact, quasi-separated) scheme, $Z$ a collection of Cartier divisors, $X=X^+\setminus Z$, and $M$ a quasi-coherent $\sh O_X$-module, then these data are locally of the form as above. A \emph{fractional submodule} of $M$ is then a locally finite type $\sh O_{X^+}$-subsheaf. Even if we only care about the affine case, our definition of integral extensions goes via non-affine schemes.

We denote by $\B(M;A^+)$, resp.\ $\B(M;\sh O_{X^+})$ the set of fractional submodules of $M$.\footnote{The prefix for this object was $\B^c$ in \cite{mac}, and $\B$ stood for its lattice completion.} In particular, $\B[A;A^+]$ is an idempotent semiring under the operations of max (ideal sum) and plus (ideal multiplication), and $\B(M;A^+)$ is a $\B[A;A^+]$-module.

If $f:Y^+\rightarrow X^+$ is a morphism of schemes, there is an induced pullback map
\[ f^{-1}:\B(M;\sh O_{X^+}) \rightarrow \B(f^*M;\sh O_{Y^+}) \]
that takes $N\subseteq M$ to the image of $f^*N\rightarrow f^*M$, and a pushforward
\[ f_*:\B(M;\sh O_{Y^+}) \rightarrow \B(f_*M;\sh O_{X^+}), \]
where $M$ is now an $\sh O_{Y^+}$-module, defined by ordinary pushforward of modules.

\begin{defn}\label{INT_ALG_DEF}We say that $N_1\subseteq N_2\subseteq M$ is an \emph{integral extension} of fractional submodules of $M$ (relative to $Z$) if they become equal as submodules of $M$ after pullback to a modification of $X^+$ along $Z$. More precisely, $N_1\subseteq N_2$ is integral if there exists a finite type blow-up $f:\tilde X^+\rightarrow X^+$, along an ideal cosupported set-theoretically in $Z$, such that the inverse images $f^{-1}N_i$ of the $N_i$ in $f^*M$ are equal.\end{defn}

\begin{remark}\ref{ALG_RMK}If $f$ is a modification along $Z$, then $f_*f^*M\cong M$, since both are pushed forward from $M|_X$. It therefore makes sense to compare $N_i$ and $f_*f^{-1}N_i$ as submodules of $M$. By adjunction, $N_1\subseteq N_2$ is an integral extension if and only if there exists a modification $f$ $f_*f^{-1}N_1$ contains $N_2$.\end{remark}

\begin{lemma}\label{ALG_EXT}An extension of fractional submodules of $M$ is integral if and only if their classes in  $\B(M;\sh O_{X^+})$ are an integral extension of elements.\end{lemma}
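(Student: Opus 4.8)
The plan is to match the geometric condition of Definition \ref{INT_ALG_DEF} against the equivalent criteria of Proposition \ref{EXT_PROPS}, using the correspondence (the module analogue of the localisation theory of \cite{mac}) between modifications of $X^+$ along $Z$ and bounded localisations of the ambient semiring. Since $N_1\subseteq N_2$ are fractional, the principal order ideals they generate in $\mu:=\B(M;\sh O_{X^+})$ are finitely generated, so I may use the sharpest criteria \emph{iv)}, \emph{v)} of Proposition \ref{EXT_PROPS}: the classes of $N_1,N_2$ are an integral extension of elements if and only if they become equal in some bounded localisation $\mu[-S]$, equivalently if there is a bounded fractional ideal $S$ with $S N_1=S N_2$. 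The lemma thus reduces to the assertion that equality after pullback to a modification along $Z$ is the same as equality after multiplication by a bounded fractional ideal. The bridge is that blowing up $X^+$ along a fractional ideal $\sh I$ cosupported in $Z$ is the geometric incarnation of inverting a bounded element: such an $\sh I$ is bounded above by $\sh O_{X^+}$ and, because $Z$ is Cartier and $V(\sh I)\subseteq Z$, bounded below by a power $\sh O(-mZ)$ of the invertible ideal of $Z$; and the blow-up makes $\sh I$ invertible while being an isomorphism over $X=X^+\setminus Z$.

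For the direction producing a blow-up from a bounded $S$, I would twist by an invertible ideal to arrange $S\subseteq\sh O_{X^+}$, so that $V(S)\subseteq Z$, and let $f:\tilde X^+\to X^+$ be the blow-up of $S$, a modification along $Z$ on which $\sh L:=S\sh O_{\tilde X^+}$ is invertible. Since $f^{-1}$ carries products to products, $f^{-1}(S N_i)=\sh L\cdot f^{-1}N_i$; cancelling the line bundle $\sh L$ in the identity $\sh L\,f^{-1}N_1=\sh L\,f^{-1}N_2$ yields $f^{-1}N_1=f^{-1}N_2$, which is exactly the integral extension of Definition \ref{INT_ALG_DEF}.

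The converse is where the main obstacle lies. Given a blow-up $f$ of $\sh I$ cosupported in $Z$, with $\sh I\sh O_{\tilde X^+}=\sh L$ invertible and relatively ample, and with $f^{-1}N_1=f^{-1}N_2$, I want an \emph{exact} equality $S N_1=S N_2$ for some \emph{bounded} $S$. Pushing forward is delicate precisely because $f_*f^{-1}$ is a saturation operator: for $k\gg0$ one has $f^{-1}(\sh I^k N_i)=\sh L^k f^{-1}N_i$ and $f_*(\sh L^k f^{-1}N_i)=\overline{\sh I^k N_i}$ (using $f_*f^*M\cong M$ of Remark \ref{ALG_RMK}), so the hypothesis gives only the equality of integral closures $\overline{\sh I^k N_1}=\overline{\sh I^k N_2}$, not of the modules themselves. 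To upgrade to an exact relation I would invoke the reduction criterion \cite[\S1.2]{SwansonHuneke}: equal integral closures make $\sh I^k N_1$ a reduction of $\sh I^k N_2$, i.e.\ (in the fractional-ideal case $M=A$) $(\sh I^k N_2)^{m}=(\sh I^k N_1)(\sh I^k N_2)^{m-1}$ for some $m$, which rearranges to $S N_1=S N_2$ with $S=\sh I^{km}N_2^{m-1}$; when $A$ is a domain (or $X^+$ affine) every nonzero fractional ideal is bounded, so $S$ is bounded and criterion \emph{v)} is met, the general module case going through the analogous module reductions. The genuine difficulty is this extraction of a \emph{bounded} multiplier in the general scheme setting — verifying that the saturation discrepancy is itself an integral extension and that the resulting $S$ remains bounded. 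A more conceptual route that sidesteps the reduction criterion altogether is to identify $\mu\oplus_\alpha\mathrm{PL}(\alpha)$ with the filtered colimit $\colim_f\B(f^*M;\sh O_{\tilde X^+})$ taken over modifications $f$ of $X^+$ along $Z$; then ``$N_1,N_2$ become equal on some modification'' is literally ``equal in the colimit,'' which by criterion \emph{ii)} of Proposition \ref{EXT_PROPS} is the integral extension of elements, settling both directions simultaneously.
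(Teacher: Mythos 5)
Your closing ``conceptual route'' is, in substance, the paper's own proof: by the blow-up formula of \cite[Prop.\ 5.22]{mac}, pullback along a finite type blow-up $f:\tilde X^+\rightarrow X^+$ with centre $T$ cosupported in $Z$ identifies $\B(f^*M;\sh O_{\tilde X^+})$ with the bounded localisation $\B(M;\sh O_{X^+})[-T]$, and every bounded localisation of the fractional submodule semiring arises this way; hence ``$f^{-1}N_1=f^{-1}N_2$ on some modification'' is verbatim criterion \emph{iv)} of proposition \ref{EXT_PROPS} applied to the classes of the $N_i$, and both directions of the lemma are settled at once. Had you led with that paragraph — citing the blow-up formula as the one non-formal input, which is exactly what the paper does — the proof would be complete.

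The main body of your proposal, by contrast, has a genuine gap in the converse direction, which you partly acknowledge. By insisting on realising criterion \emph{v)} geometrically — an exact equality $SN_1=SN_2$ of submodules of $M$ on $X^+$ itself — you force a descent from the blow-up via $f_*$, and $f_*f^{-1}$ is a saturation, so it only returns integral closures. Your patch via the reduction criterion \cite[Cor.\ 1.2.5]{SwansonHuneke} requires (a) Noetherian hypotheses, (b) $M=A$, since the expression $N_2^{m-1}$ is meaningless for submodules of a general quasi-coherent module (reductions of modules are precisely the delicate theory that the paper's final remark declines to compare with), and (c) boundedness of $N_2$, which holds for nonzero fractional ideals of a domain but not in general; even your intermediate claim $f_*(\sh L^kf^{-1}N_i)=\overline{\sh I^kN_i}$ is not free. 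Since the lemma is stated for arbitrary quasi-compact quasi-separated $X^+$ and quasi-coherent $M$ with no Noetherian hypothesis, this route cannot prove it; in the paper that machinery appears only later, in proposition \ref{ALG_PROP}, exactly where Noetherian hypotheses are imposed to compare with the Swanson--Huneke notion. The moral is that the equivalence ``equal in a bounded localisation $\Leftrightarrow$ exactly equal after a bounded multiplier'' should be consumed where it is formal — inside the semiring, as \emph{iv)} $\Leftrightarrow$ \emph{v)} of proposition \ref{EXT_PROPS}, since localisation is the filtered colimit along $+T$ — not re-proved through pushforward. Indeed, once the blow-up formula is in place, ``equal on the blow-up along $T$'' already yields $T^nN_1=T^nN_2$ on $X^+$ for some $n$, as equality at a finite stage of the colimit, with no $f_*$ and no integral closures appearing; the exact-multiplier statement you laboured over comes out as a corollary of the lemma rather than an ingredient of its proof.
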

\begin{proof}
By the blow-up formula of \cite[Prop. 5.22]{mac}, a blow-up $\tilde X^+\rightarrow X^+$ along a finitely generated ideal $T$ induces a bounded localisation
\[ f^{-1}:\B(M;\sh O_{X^+}) \rightarrow \B(M;\sh O_{X^+})[-T]\cong \B(f^*M;\sh O_{\tilde X^+}), \]
and all bounded localisations of $\B[\sh O_X;\sh O_{X^+}]$ arise in this way. In the language of these semirings, the condition that $N_1\subseteq N_2$ be an integral extension is therefore precisely condition \emph{iv)} of proposition \ref{EXT_PROPS} applied to their classes in the fractional submodule semiring.\end{proof}

\begin{defn}The \emph{integral closure} of $N$ in $M$ is the union of all its integral extensions:
\[ \colim_{f:\widetilde{X}^+\rightarrow X^+} f_*f^{-1}N  \quad \hookrightarrow \quad M. \]
Note that this need not be any longer a fractional submodule by our definitions: it might not be finitely generated (even when $X^+$ is Noetherian and $M=\sh O_X$).

An $\sh O_{X^+}$-submodule of $M$ is said to be a \emph{integrally closed fractional submodule} if it is an integral closure of a fractional submodule. Integrally closed fractional submodules are \emph{not} necessarily fractional submodules. We denote by $\B^\nu(M;A^+)$ the set of integrally closed fractional submodules of $M$.\end{defn}

\begin{prop}\label{ALG_COMP}Let $X\subseteq X^+$ be a pair of schemes, $M$ a quasi-coherent $\sh O_X$-module. Then
\[ \B(M;\sh O_{X^+}) \rightarrow \B^\nu(M;\sh O_{X^+}) \]
is a normalisation of $\B[A;A^+]$-modules.\end{prop}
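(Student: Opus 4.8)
The plan is to identify the map $\B(M;\sh O_{X^+})\to\B^\nu(M;\sh O_{X^+})$ with the abstract normalisation of $\mu:=\B(M;\sh O_{X^+})$ as a module over $\alpha:=\B[A;A^+]$, and then invoke the results of \S\ref{NORM}. Recall from there that the normalisation $\mu^\nu$ is the image of $\mu$ in $\mu\oplus_\alpha\mathrm{PL}(\alpha)$, that the structure map $\mu\to\mu^\nu$ is surjective, and that to be a normalisation means precisely to be isomorphic, compatibly with the structure maps, to this reflection (Proposition \ref{INT_NORMALISATION}). So it suffices to produce a natural bijection $\mu^\nu\cong\B^\nu(M;\sh O_{X^+})$ under which $\mu\to\mu^\nu$ becomes the geometric integral closure operator $N\mapsto\colim_f f_*f^{-1}N$.

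First I would compute $\mu\oplus_\alpha\mathrm{PL}(\alpha)$ geometrically. Since tensor sum commutes with the filtered colimit defining $\mathrm{PL}(\alpha)$, one has $\mu\oplus_\alpha\mathrm{PL}(\alpha)=\colim_S\mu[-S]$, the filtered colimit of bounded localisations. By the blow-up formula \cite[Prop. 5.22]{mac} used in the proof of Lemma \ref{ALG_EXT}, each such localisation is $\B(f^*M;\sh O_{\tilde X^+})$ for a blow-up $f:\tilde X^+\rightarrow X^+$ along an ideal cosupported in $Z$, every such blow-up arises this way, and these modifications form a filtered system (any two are dominated by a common blow-up). Hence $\mu\oplus_\alpha\mathrm{PL}(\alpha)=\colim_f\B(f^*M;\sh O_{\tilde X^+})$, and the image of a fractional submodule $N$ is represented by the compatible family of pullbacks $(f^{-1}N)_f$.

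Next I would read off when two fractional submodules share an image. Because a filtered colimit of sets identifies exactly those elements that agree at some finite stage, $N_1$ and $N_2$ have equal image in $\mu^\nu$ iff $f^{-1}N_1=f^{-1}N_2$ for some modification $f$; by Lemma \ref{ALG_EXT} (equivalently Definition \ref{INT_ALG_DEF}) this occurs precisely when $N_1$ and $N_2$ are both integral over $N_1+N_2$, i.e. have the same integral closure. To represent each resulting class canonically inside $M$, I would use the adjunction of Remark \ref{ALG_RMK}: since $f_*f^*M\cong M$ for a modification along $Z$, the submodule $f_*f^{-1}N$ lives inside $M$, is covariant and increasing along refinements of $f$, and its filtered union $\colim_f f_*f^{-1}N$ is the largest submodule integral over $N$ — that is, the integral closure defined just before the statement, an element of $\B^\nu(M;\sh O_{X^+})$. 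This yields the desired bijection $\mu^\nu\cong\B^\nu(M;\sh O_{X^+})$, compatible with the structure maps.

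Finally, the module structure and the remaining properties come for free from \S\ref{NORM}: the action of $\alpha$ descends to $\mu^\nu$ by the persistence property, so the bijection is $\alpha$-linear; $\mu^\nu$ is normal and is the reflection of $\mu$ by Proposition \ref{INT_NORMALISATION}; and the map is surjective by construction. Therefore $\B(M;\sh O_{X^+})\to\B^\nu(M;\sh O_{X^+})$ is a normalisation of $\B[A;A^+]$-modules. I expect the main obstacle to be the identification in the third paragraph: matching the set-theoretic image in the filtered colimit $\mu\oplus_\alpha\mathrm{PL}(\alpha)$ with the geometric integral closure, which requires care that the system of modifications is genuinely filtered and that $f_*f^{-1}N$ interacts correctly with the colimit through the adjunction $f_*f^*M\cong M$ — including the point that the resulting integral closure may fail to be finitely generated, so that it is an element of $\B^\nu$ but not of $\B$.
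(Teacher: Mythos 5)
Your proposal is correct, but it takes a different route through the general theory than the paper does. The paper's proof is a one-liner resting on the \emph{lattice} characterisation of normalisation (the ``alternatively, by \emph{v)}'' description in \S\ref{NORM}): it observes that $\sh L\B(M;A^+)$ is exactly the lattice of \emph{all} $A^+$-submodules of $M$, so that after Lemma \ref{ALG_EXT} identifies semiring-theoretic with geometric integral extensions, the image of $\mu\rightarrow\sh L(\mu)\rightarrow\sh L^\nu(\mu)$ is tautologically the set $\B^\nu(M;\sh O_{X^+})$ of geometric integral closures --- no colimits over blow-ups, no pushforwards. You instead use characterisation \emph{iii)} of Proposition \ref{INT_NORMALISATION}, computing $\mu\oplus_\alpha\mathrm{PL}(\alpha)$ as the filtered colimit of $\B(f^*M;\sh O_{\tilde X^+})$ over modifications $f$, and then re-embedding each class into $M$ via $\colim_f f_*f^{-1}N$. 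Both arguments pivot on the same key input (Lemma \ref{ALG_EXT}, i.e.\ the blow-up formula), but your version must additionally verify that the system of modifications is filtered, that classes in the colimit are detected at a single finite stage (here finite generation of fractional submodules is genuinely used), and that the pushforward adjunction $f_*f^*M\cong M$ produces canonical representatives; the paper's lattice route sidesteps all of this bookkeeping because non-finitely-generated submodules already live in $\sh L(\mu)$. In compensation, your argument makes the geometric meaning explicit: the normalisation is literally the module of ``compatible families along all modifications''. One small verbal slip: you say $N_1,N_2$ share an image iff both are ``integral over $N_1+N_2$''; the direction is reversed --- it is the sum $N_1+N_2$ (the join $N_1\vee N_2$ in the semiring's notation) that is integral over each $N_i$ --- but the conclusion you actually use, that $N_1$ and $N_2$ have the same integral closure, is the right one and your argument for it stands.
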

\begin{proof}Indeed, $\sh L\B(M;A^+)$ is the lattice of all $A^+$-submodules of $M$, and by lemma \ref{ALG_EXT}, $\B^\nu(M;\sh O_{X^+})\subseteq\sh L\B(M;A^+)$ is precisely the set of integral closures of elements of $\B(M;A^+)$.\end{proof}

\

It remains to compare our notion of integral closure to the more traditional ones.

\paragraph{Extensions of rings}An algebra extension of $A^+$ inside $A$ is said to be integral, in the usual terminology, if it is a union of finite algebra extensions. In this section we are mainly considering finitely generated modules; therefore, we will compare our notion of integral extension to that of \emph{finite} algebra extension.

\begin{prop}Let $A^+$ be Noetherian, $A^+\subseteq N$ an extension of fractional submodules of $A$. Let $t\in A^+$ be such that $tN\subseteq A^+$. The following are equivalent:
\begin{enumerate}\item $A^+\subseteq N$ is an integral extension with respect to $Z=(t)$;
\item $N$ is contained in a finite $A^+$-algebra inside $A$.\end{enumerate}\end{prop}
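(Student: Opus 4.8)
The plan is to rescale by $t$, converting the statement into a question about an honest ideal of $A^+$, and then to run both implications through the geometry of modifications of $\Spec A^+$. Set $I:=tN$. Since $tN\subseteq A^+$ and $A^+\subseteq N$, we have $(t)\subseteq I\subseteq A^+$, the ideal $I$ is finitely generated, and $N=t^{-1}I$ inside $A$; injectivity of $A^+\rightarrow A$ makes $t$ a nonzerodivisor, so $A=A^+[1/t]$ and $t$ is invertible precisely on $X=X^+\setminus V(t)$. For any modification $f:\widetilde{X}^+\rightarrow X^+$ with centre in $V(t)$ the pullbacks of the two fractional ideals are $f^{-1}A^+=\sh O_{\widetilde{X}^+}$ and $f^{-1}N=t^{-1}\bigl(I\sh O_{\widetilde{X}^+}\bigr)$, since $t$ is already a unit in $A$. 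Hence, by Definition \ref{INT_ALG_DEF}, the extension $A^+\subseteq N$ is integral relative to $(t)$ if and only if there is such an $f$ with $I\sh O_{\widetilde{X}^+}=t\sh O_{\widetilde{X}^+}$.

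For \emph{i)}$\Rightarrow$\emph{ii)}, choose such an $f$ and put $B:=\Gamma(\widetilde{X}^+,\sh O_{\widetilde{X}^+})$, which I view as a subring of $A$ via the isomorphism $\widetilde{X}^+\cong X$ over $X$. As $A^+$ is Noetherian and $f$ is proper (a blow-up), the sheaf $f_*\sh O_{\widetilde{X}^+}$ is coherent, so $B$ is a finite $A^+$-module, and being a subring of $A$ it is a finite $A^+$-algebra inside $A$. Finally $N\subseteq B$: by the adjunction of Aside \ref{ALG_RMK} we have $N\subseteq f_*f^{-1}N$, and since $f^{-1}N=f^{-1}A^+$ this gives $N\subseteq f_*f^{-1}A^+=B$.

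For \emph{ii)}$\Rightarrow$\emph{i)}, let $N\subseteq B$ with $B$ a finite $A^+$-algebra in $A$. Because $1\in N$ and $N\subseteq B$ we get $NB=B$, whence $IB=tNB=tB$; thus $I$ becomes principal, generated by $t$, on the finite modification $Y:=\Spec B\rightarrow X^+$, which is an isomorphism over $X$ (note $B[1/t]=A$). It remains to replace $Y$ by a genuine blow-up. Since $Y\rightarrow X^+$ is a proper morphism of Noetherian schemes, an isomorphism over the open $X$, there is a blow-up $f:\widetilde{X}^+\rightarrow X^+$ along an ideal cosupported in $V(t)$ that factors through $Y$. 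Pulling the identity $IB=tB$ back along $\widetilde{X}^+\rightarrow Y$ yields $I\sh O_{\widetilde{X}^+}=t\sh O_{\widetilde{X}^+}$, that is $f^{-1}A^+=f^{-1}N$, so $A^+\subseteq N$ is integral relative to $(t)$.

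The pullback computation of the first paragraph and the identity $NB=B$ are routine. The two steps where the Noetherian hypothesis and the precise geometric definition genuinely enter are the coherence of $f_*\sh O_{\widetilde{X}^+}$, which supplies module-finiteness in \emph{i)}$\Rightarrow$\emph{ii)}, and the domination of the finite modification $\Spec B$ by an actual blow-up in \emph{ii)}$\Rightarrow$\emph{i)}. I expect the latter to be the main obstacle: Definition \ref{INT_ALG_DEF} demands a blow-up along an ideal, whereas $\Spec B$ is presented only as a finite birational modification, so one must invoke the structure theory of modifications of Noetherian schemes (every such modification is dominated by a blow-up centred in the exceptional locus). One can sidestep this geometric input by passing to the semiring via Lemma \ref{ALG_EXT}: domination corresponds to a further bounded localisation, under which the condition $I\sh O=t\sh O$ is preserved, so integrality of the classes in $\B(A;\sh O_{X^+})$ follows directly from criterion \emph{iv)} of Proposition \ref{EXT_PROPS}.
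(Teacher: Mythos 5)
Your proof of \emph{i)}$\Rightarrow$\emph{ii)} is essentially the paper's own argument (coherence of $f_*\sh O_{\tilde X^+}$ for a proper morphism of Noetherian schemes, plus the adjunction $N\subseteq f_*f^{-1}N$ of aside \ref{ALG_RMK}), and your first-paragraph reformulation of integrality as $I\sh O_{\tilde X^+}=t\sh O_{\tilde X^+}$ is correct. The gap is in \emph{ii)}$\Rightarrow$\emph{i)}, and it occurs \emph{before} the step you flagged as the main obstacle. You assert that $A=A^+[1/t]$; this is a non sequitur (injectivity of $A^+\rightarrow A$, together with the implicit invertibility of $t$ in $A$, makes $t$ a nonzerodivisor, but nothing more), and it is false in the stated generality: $A$ is a localisation of $A^+$ fixed in the section preamble, possibly much larger than $A^+[1/t]$, and $t$ is merely some element with $tN\subseteq A^+$. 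Consequently your claim that $Y=\Spec B\rightarrow X^+$ is an isomorphism over $X$ (``note $B[1/t]=A$'') can fail for an arbitrary witness $B$ of hypothesis \emph{ii)}, since nothing bounds the denominators of elements of $B$ by powers of $t$. Concretely: take $A^+=\Z[\sqrt{5}]$, $A=\Q(\sqrt{5})$, $t=3$, $N=A^+$, and $B=\Z[(1+\sqrt{5})/2]$. Then $B$ is a finite $A^+$-algebra containing $N$, but $\Spec B\rightarrow\Spec A^+$ is the normalisation, which is \emph{not} an isomorphism over $X=X^+\setminus V(3)$ (it changes the fibre over the prime $2$), so no blow-up centred in $V(3)$ can dominate it, and your argument halts. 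The same defect blocks the proposed semiring ``sidestep'': to apply criterion \emph{iv)}/\emph{v)} of proposition \ref{EXT_PROPS} you need the class of $B$ to be a \emph{bounded} element of $\B[A;A^+]$, and boundedness above by an invertible element is precisely the containment $B\subseteq t^{-n}A^+$ that fails here.

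The repair is exactly the move the paper makes at the start of its converse: replace $B$ by $A^+[N]$, the $A^+$-subalgebra of $A$ generated by $N$. It is a finite $A^+$-module because it is a submodule of the finite module $B$ over the Noetherian ring $A^+$ (this is where the Noetherian hypothesis enters this direction), and by finite generation $t^nA^+[N]\subseteq A^+$ for some $n$, so that $\Spec A^+[N]\rightarrow X^+$ genuinely is a finite modification along $V(t)$; integrality of $A^+\subseteq A^+[N]$ then gives integrality of $A^+\subseteq N$ by squeezing. With this fix your route does go through, but note its cost: it invokes the nontrivial theorem that a proper morphism which is an isomorphism over an open $U$ is dominated by a $U$-admissible blow-up. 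The paper avoids that machinery entirely by exhibiting the blow-up explicitly: it blows up the ideal $tN\trianglelefteq A^+$ (with $N$ now the algebra) and checks on the chart where $tg$ generates the exceptional ideal that any $f\in N$ equals $(tfg)/(tg)$, hence is regular there. The only geometric input is then the definition of a blow-up, which is better adapted to the paper's framework.
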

\begin{proof}By the finiteness theorem for projective morphisms, if $f:\tilde X^+\rightarrow X^+=\Spec A^+$ is a finite type blow-up, then $f_*f^{-1}\sh O_{X^+}$ is finite over $A^+$. This proves \emph{i)}$\Rightarrow$\emph{ii)} (cf.\ aside \ref{ALG_RMK}).

Conversely, suppose, without loss of genarlity, that $N$ is an $A^+$-algebra. Every element $f\in N$ can be realised as a global function on the blow-up of $\Spec A^+$ along the finite type ideal $tN\trianglelefteq A^+$. Indeed, on the principal chart $(g\neq 0)$ with $g\in N$, it can be expressed as $(fg)/g\in g^{-1}N$. Therefore $N$ is integral over $A^+$ in the sense of definition \ref{INT_ALG_DEF}.\end{proof}

The Noetherian hypothesis in this result can be dropped by using an approximation argument; cf.\ \cite[lemma 3.19]{rig1}.

\paragraph{Extensions of ideals}Let now $I\trianglelefteq\sh O_{X^+}$ be a finite type ideal, and let $f$ be a blow-up with centre $T\trianglelefteq \sh O_{X^+}$. Then $I\subseteq f_*f^{-1}I$ is an integral extension in the sense of definition \ref{INT_ALG_DEF}, and the Rees algebras $R_*$ fit into a commutative diagram
\[\xymatrix{
\Spec R_{f^{-1}I} \ar[r]\ar[d] & \tilde X^+ \ar[dd]^f \\
\Spec R_{f_*f^{-1}I} \ar[d] \\
\Spec R_I \ar[r] & X^+
}\]
whose vertical morphisms are isomorphisms after pullback to $X$.

The morphism $g:\Spec R_{f^{-1}I}\rightarrow\Spec R_I$ is projective, and hence $g_*\sh O\cong R_{f_*f^{-1}I}$ is - at least when $X^+$ is Noetherian - a finite $R_I$-algebra. In other words, \[R_{f_*f^{-1}I}=\bigoplus_{n\in\N}(f_*f^{-1}I)^nt^n\] sits between $R_I$ and its integral closure in $\sh O_X[t]$ - which, by \cite[Prop. 5.2.1]{SwansonHuneke}, is equal to \[ \bigoplus_{n\in\N}\overline{I^n}t^n, \] where $\overline{I^n}$ denotes the integral closure in the sense of \emph{op. cit.}. In particular, $f_*f^{-1}I \subseteq \overline{I}$.

\begin{prop}\label{ALG_PROP}Suppose that $X$ is Noetherian, and let $I\subseteq J$ be an extension of fractional ideals. The following are equivalent:
\begin{enumerate}\item the extension is integral with respect to any $Z$ containing the zero locus of $J$ (in the sense of definition \ref{INT_ALG_DEF});
\item the extension is integral in the (usual) sense of \cite[Def. 1.1.1]{SwansonHuneke};
\item $I$ becomes equal to $J$ after blowing up $J$.\end{enumerate}\end{prop}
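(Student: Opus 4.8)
The plan is to prove the equivalence of three characterizations of integral extensions of fractional ideals by establishing a cycle of implications, leveraging the Rees algebra computation already carried out in the paragraph preceding the statement. First I would set up the key players: the fractional ideals $I\subseteq J$ inside $A$, and the blow-up $f:\tilde X^+\rightarrow X^+$ along $J$ (the centre $T=J$). The preceding discussion shows that $f_*f^{-1}I\subseteq\overline I$ and that the Rees algebra $R_{f_*f^{-1}I}$ sits between $R_I$ and its integral closure $\bigoplus_n\overline{I^n}t^n$; this is the technical backbone I would reuse rather than reprove.

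For the implications, I would argue \emph{iii)}$\Rightarrow$\emph{i)}: if $I$ becomes equal to $J$ after blowing up $J$, then taking $Z$ to contain the zero locus of $J$, the modification witnessing equality is exactly a blow-up along an ideal cosupported in $Z$, so $I\subseteq J$ is integral in the sense of definition \ref{INT_ALG_DEF}. For \emph{i)}$\Rightarrow$\emph{ii)}, I would use lemma \ref{ALG_EXT} to translate integrality of the extension in the sense of definition \ref{INT_ALG_DEF} into an integral extension of their classes in $\B(A;\sh O_{X^+})$, and then invoke the Rees-algebra inclusion $f_*f^{-1}I\subseteq\overline I$: integrality in our sense forces $J\subseteq f_*f^{-1}I$ for a suitable $f$, hence $J\subseteq\overline I$, which is precisely the classical integral-dependence condition of \cite[Def. 1.1.1]{SwansonHuneke}. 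The remaining implication \emph{ii)}$\Rightarrow$\emph{iii)} is the classical statement (valid in the Noetherian setting) that $J\subseteq\overline I$ implies $I$ and $J$ generate the same ideal after blowing up $J$ — i.e.\ $\overline I$ and $\overline J$ agree on the blow-up of $J$, so a fortiori $I$ and $J$ do; this is essentially the content of \cite[Prop. 5.2.1]{SwansonHuneke} combined with the universal property of the blow-up making $J$ locally principal.

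The main obstacle I anticipate is the careful bookkeeping in \emph{i)}$\Rightarrow$\emph{ii)}: definition \ref{INT_ALG_DEF} allows an \emph{arbitrary} finite type blow-up along an ideal cosupported in $Z$, whereas the clean Rees-algebra computation was performed specifically for the blow-up of $J$ itself. I would need to argue that integrality witnessed by some blow-up $f'$ can be compared to the blow-up of $J$ — either by dominating both by a common modification, or by observing that the integral closure $\overline I$ is insensitive to which modification along $Z$ one uses, since by remark \ref{ALG_RMK} the integral closure is the colimit $\colim_f f_*f^{-1}I$ over all such modifications. This colimit description is what makes the passage robust: any single witnessing $f'$ factors into the colimit, and the Noetherian hypothesis guarantees the colimit stabilises at $\overline I$, reconciling the arbitrary-modification definition with the specific blow-up of $J$.

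Throughout, the Noetherian hypothesis is doing real work — it is what makes the projective morphism $g$ have finite pushforward and hence identifies $R_{f_*f^{-1}I}$ with a finite algebra sandwiched below the integral closure of the Rees algebra. I would flag explicitly where this is used, so that the reader sees the result is genuinely a translation between the semiring-theoretic notion of integral extension developed in \S\ref{INT} (via lemma \ref{ALG_EXT}) and the classical ideal-theoretic one, with the blow-up of $J$ serving as the canonical modification realising both.
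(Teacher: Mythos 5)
Your cycle of implications has the same skeleton as the paper's proof, and two of the three legs are sound: \emph{iii)}$\Rightarrow$\emph{i)} is indeed immediate from definition \ref{INT_ALG_DEF}, and your \emph{i)}$\Rightarrow$\emph{ii)} is exactly the paper's argument, namely $J\subseteq f_*f^{-1}J=f_*f^{-1}I\subseteq\overline{I}$ using the Rees-algebra paragraph. Incidentally, the ``main obstacle'' you anticipate there is illusory: that paragraph establishes $f_*f^{-1}I\subseteq\overline{I}$ for a blow-up with \emph{arbitrary} centre $T$, not only for the blow-up of $J$, so no domination-of-modifications or colimit-stabilisation argument is needed (and identifying the colimit $\colim_f f_*f^{-1}I$ with $\overline I$ would in any case be circular, since that identification is essentially what the proposition asserts).

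The genuine gap is in \emph{ii)}$\Rightarrow$\emph{iii)}. From $J\subseteq\overline{I}$ you correctly get $\overline{I}=\overline{J}$, but the step ``$\overline{I}$ and $\overline{J}$ agree on the blow-up of $J$, so a fortiori $I$ and $J$ do'' is a non sequitur: equality of the larger ideals says nothing about equality of the smaller ones. The two facts that might rescue it both fail in general --- pullback does not commute with integral closure (only $f^{-1}\overline{I}\subseteq\overline{f^{-1}I}$ holds), and the invertible ideal $f^{-1}J$ need not be integrally closed on the blow-up, since a blow-up of a Noetherian scheme need not be normal. What is missing is a \emph{cancellation} step, and this is precisely where the Noetherian hypothesis and the paper's cited tool enter: by the reduction criterion \cite[Cor.\ 1.2.5]{SwansonHuneke}, $J\subseteq\overline{I}$ gives $J^{n+1}=IJ^n$ for $n\gg0$; pulling this identity back to the blow-up of $J$ and cancelling the invertible ideal $(f^{-1}J)^{n}$ yields $f^{-1}I=f^{-1}J$. (Equivalently, one can pull back an equation of integral dependence of a local nonzerodivisor generator $t$ of $f^{-1}J$ over $f^{-1}I$, deduce $t^n\in (f^{-1}I)\,t^{n-1}$, and cancel $t^{n-1}$.) Neither \cite[Prop.\ 5.2.1]{SwansonHuneke} nor the universal property of the blow-up supplies this cancellation, so as written your third implication does not close.
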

\begin{proof}By definition, \emph{iii)} implies \emph{i)}, and we have seen that integral extensions of fractional ideals satisfy equations of integral dependence.

For the remaining implication, suppose that the elements of $J$ satisfy equations of integral dependence over $I$. By the \emph{reduction criterion} \cite[Cor.\ 1.2.5]{SwansonHuneke}, it follows that
\[ J^{n+1}=IJ^n \]
for $n\gg 0$. Thus, $I$ becomes equal to $J$ after blowing up $J$.\end{proof}

\begin{remark}The implication \emph{i)}$\Rightarrow$\emph{iii)} can be phrased purely semiring-theoretically - it is nothing more than a version of the reduction property \eqref{the_implication} of lemma \ref{SEMIRINGS_LEMMA}. 

In fact, it is even valid without the assumption on $Z$, which corresponds to the assumption in \eqref{the_implication} that $X\vee Y$ be \emph{bounded}. However, I don't know a purely semiring-theoretic \emph{proof} without that assumption - indeed, since the proof here goes via the finiteness theorem for projective morphisms, it is difficult to see how it could translate.\end{remark}

\begin{cor}\label{ALG_COR0}Let $X$ be Noetherian and integral with function field $K$. Let $Z$ be the collection of all Cartier divisors on $X$. An extension of ideals on $X$ is integral with respect to $Z$ if and only if it is integral in the sense of
 \cite[Def. 1.1.1]{SwansonHuneke}.\end{cor}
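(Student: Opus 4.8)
The plan is to obtain the corollary as the special case of Proposition~\ref{ALG_PROP} in which $Z$ is the maximal collection of Cartier divisors, the only genuinely new ingredient being a geometric observation that legitimises this choice. Since a genuine ideal is a fractional ideal contained in $\sh O_X$, and Proposition~\ref{ALG_PROP} already equates integrality in the sense of \cite[Def.\ 1.1.1]{SwansonHuneke} with integrality relative to any $Z$ containing the zero locus $V(J)$, it suffices to verify that on an \emph{integral} Noetherian $X$ the union of all Cartier divisors does contain $V(J)$ set-theoretically. The corollary is then exactly that instance of the proposition.

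The heart of the argument is this covering statement. After discarding the trivial cases $J=0$ (forcing $I=0$) and $J=\sh O_X$ (where $V(J)=\emptyset$), I would take $J$ proper and nonzero, so that $V(J)$ is a proper closed subset avoiding the generic point. For a point $x\in V(J)$ I would produce an effective Cartier divisor through $x$ directly from integrality: the local ring $\sh O_{X,x}$ is a Noetherian domain of dimension $\geq 1$, so its maximal ideal contains a nonzero element $t$, which is automatically a nonzerodivisor, and the local vanishing locus $V(t)$ is an effective Cartier divisor passing through $x$. Spreading $t$ over a neighbourhood exhibits $x$ in the support of a Cartier divisor, whence $V(J)$ is contained set-theoretically in the union of all Cartier divisors.

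With the covering in place, both implications drop out. For the implication from \cite[Def.\ 1.1.1]{SwansonHuneke}-integrality to integrality relative to $Z$, I would invoke condition~iii) of Proposition~\ref{ALG_PROP}: the blow-up of $J$ already forces $f^{-1}I=f^{-1}J$, and its centre, cosupported at $V(J)\subseteq Z$, is an admissible centre. For the converse, integrality relative to $Z$ supplies a blow-up $f$ with $f^{-1}I=f^{-1}J$; since such an $f$ is a modification along $Z$ (an isomorphism over the complement of its centre), pushing forward and using $J\subseteq f_*f^{-1}J$ from Remark~\ref{ALG_RMK} together with the inclusion $f_*f^{-1}I\subseteq\overline I$ recorded before Proposition~\ref{ALG_PROP} yields
\[ J\subseteq f_*f^{-1}J=f_*f^{-1}I\subseteq\overline I, \]
which is exactly integrality of $J$ over $I$. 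Notably this direction requires no hypothesis on the centre of $f$ at all.

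The single step with real content is the set-theoretic covering of $V(J)$ by Cartier divisors, and it is precisely here that integrality of $X$ is indispensable: on a non-integral scheme the local rings may have embedded or maximal associated primes and so lack nonzerodivisor non-units, leaving points through which no Cartier divisor passes, and the equivalence with the classical notion then fails. Everything else is a direct reading-off of Proposition~\ref{ALG_PROP}, so I expect no difficulty beyond this geometric point.
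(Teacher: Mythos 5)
You take essentially the same route as the paper: dispose of $J=0$, then reduce to Proposition~\ref{ALG_PROP} by checking that the zero locus of $J$ is covered by the collection of all Cartier divisors --- a covering fact the paper merely asserts (in the stronger, and unneeded, form that $V(J)$ lies in a \emph{single} Cartier divisor) but you actually prove; your spelling-out of the two directions, including the observation that the converse direction needs no hypothesis on the centre of the blow-up, matches the proof of Proposition~\ref{ALG_PROP} and the remark following it. One step wants a one-line repair: the effective Cartier divisor $V(t)$ you produce lives only on a neighbourhood $U$ of $x$, and a Cartier divisor on $U$ is not a Cartier divisor on $X$, nor need it extend to one, whereas the collection $Z$ in the corollary consists of Cartier divisors on $X$ itself. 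The fix is immediate precisely because $X$ is integral: your $t\in\mathfrak{m}_x\setminus\{0\}$ is a global rational function, i.e.\ an element of $K^*$, so the \emph{principal} Cartier divisor $\mathrm{div}(t)$ is defined on all of $X$, and its support (the locus where $t$ fails to be a unit in the local ring) contains $x$ because $t\in\mathfrak{m}_x$; effectivity is irrelevant since $Z$ contains all Cartier divisors, not just effective ones. With that substitution your covering claim, and hence the whole argument, is correct and coincides with the paper's proof.
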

\begin{proof}There are two cases: either $J=0$, in which case there are no integral extensions with either definition (cf.\ \cite[Rmk.\ 1.1.3, (4)]{SwansonHuneke}), or the zero locus of $J$ is contained in some Cartier divisor, in which case the statement is an application of proposition \ref{ALG_PROP}.\end{proof}

\begin{cor}\label{citeme}Let $X$ be Noetherian and integral with function field $K$. Then
\[ \B[K;\sh O_X]\rightarrow\B^\nu[K;\sh O_X] \]
is a normalisation of semirings, where $\B^\nu[K;\sh O_X]$ is the set of integrally closed - in the sense of \cite{SwansonHuneke} - fractional ideal sheaves of $K$.\end{cor}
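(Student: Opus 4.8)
The plan is to obtain this corollary by specialising Proposition \ref{ALG_COMP} to the self-module case $M = K$ and then identifying the resulting module normalisation with the Swanson--Huneke integral closure by means of Corollary \ref{ALG_COR0}. To set up the dictionary with \S\ref{INT_ALG}, I take the ambient scheme $X^+$ there to be the scheme $X$ of the statement, let $Z$ be the collection of all Cartier divisors on $X$, and take $A^+ = \sh O_X$ with $A$ the localisation inverting $Z$, which for $X$ integral is exactly the function field $K$. With these choices $\B[A;A^+] = \B[K;\sh O_X]$.

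First I would apply Proposition \ref{ALG_COMP} with $M = A = K$. Since $K$ is the coordinate module itself, $\B(K;\sh O_X) = \B[K;\sh O_X]$ is the semiring regarded as a module over itself, and the proposition exhibits
\[ \B[K;\sh O_X] \rightarrow \B^\nu(K;\sh O_X) \]
as a normalisation in $\Mod_{\B[K;\sh O_X]}$, the target being the fractional ideal sheaves that are integrally closed with respect to $Z$ in the sense of definition \ref{INT_ALG_DEF}. As in the proof of \ref{ALG_COMP}, this target is realised inside $\sh L\B[K;\sh O_X]$ --- the lattice of all $\sh O_X$-submodules of $K$ --- as the set of integral closures of elements.

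Next I would invoke Corollary \ref{ALG_COR0}. Because $X$ is Noetherian and integral and $Z$ consists of \emph{all} Cartier divisors, an extension of fractional ideals of $K$ is integral with respect to $Z$ precisely when it is integral in the sense of \cite[Def.\ 1.1.1]{SwansonHuneke}. Since the integral-extension relations coincide, so do their largest members, i.e.\ the integral closures; hence the two descriptions of $\B^\nu$ agree as subsets of $\sh L\B[K;\sh O_X]$, and the target of the displayed map is exactly the set $\B^\nu[K;\sh O_X]$ of Swanson--Huneke integrally closed fractional ideal sheaves.

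It then remains only to promote the conclusion from a normalisation of modules to one of semirings. For this I would appeal to the general observation recorded after Proposition \ref{INT_NORMALISATION}: as the unit $\alpha \rightarrow \alpha^\nu$ is surjective, $\alpha^\nu$ carries a canonical semiring structure for which the unit is a normalisation of semirings. Taking $\alpha = \B[K;\sh O_X]$ finishes the argument, the cancellativity of ideal product on the target that underlies semiring normality being already built into the theory of \S\ref{NORM}. The genuine mathematical content is thus carried entirely by \ref{ALG_COMP} and \ref{ALG_COR0}; I expect the only point needing care to be the bookkeeping of the dictionary above, and in particular confirming that the integrally closed objects appearing in \ref{ALG_COMP} coincide with those named in the statement --- which is exactly the service performed by \ref{ALG_COR0}.
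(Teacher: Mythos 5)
Your proposal is correct and takes exactly the paper's route: the paper's own proof reads, in full, ``By combining corollary \ref{ALG_COR0} with proposition \ref{ALG_COMP}'', which is precisely your two main steps. Your extra bookkeeping --- setting up the dictionary $X^+ = X$, $Z$ = all Cartier divisors, $A = K$, and invoking the surjectivity of the unit $\alpha \rightarrow \alpha^\nu$ to promote the module normalisation to a semiring normalisation --- merely makes explicit what the paper leaves implicit.
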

\begin{proof}By combining corollary \ref{ALG_COR0} with proposition \ref{ALG_COMP}.\end{proof}

\begin{remark}\label{ALG_RMK}In applications to non-Archimedean geometry, it is often useful to assume that the pair $(X,X^+)$ is relatively normal, that is, $\sh O_{X^+}$ is an integrally closed subring of $\sh O_X$.

Each principal open subset $(s\neq 0)$ of the blow-up of $X^+$ along $T$ defines a `completed localisation' $\sh O_X[T/s]$ of $\sh O_X$. Even if $X^+$ is relatively normal, this localisation may not be. 

By replacing
\[ \sh O_{X^+}[T/s] = \colim[ \sh O_{X^+} \stackrel{s}{\rightarrow} T \stackrel{s}{\rightarrow} T^2 \rightarrow\cdots ] \]
with 
\[ \colim[ \sh O_{X^+} \stackrel{s}{\rightarrow}\overline{T} \stackrel{s}{\rightarrow} \overline{T^2} \rightarrow\cdots ] \]
we obtain a corrected localisation which, by \cite[Prop. 5.2.1]{SwansonHuneke}, is the normalisation. Thus restricting attention to normal fractional ideals solves the problem of determining when the integral closure of $\sh O_{X^+}$ is preserved by blowing up. Compare proposition \ref{INT_ZERO}.
\end{remark}

\begin{remark}[Extensions of modules]There are a couple of inequivalent ways to define integral dependence of modules available in the literature \cite{SwansonHuneke,EHU}. I do not know of any obvious relation between either of these and the notion of integral extensions of submodules defined here, but it seems unlikely that they agree in general.\end{remark}

\section{Minkowski semiring}\label{INT_MON}

Integral closure makes sense also for ideals in commutative monoids, or even, for `$\F_1$-algebra' pairs. Under some simplifying hypotheses, the condition takes on a rather combinatorial flavour.

Everything considered below maps to a more traditional commutative algebra setting by replacing a monoid $Q$ with its monoid ring $\Z[Q]$; the study of monoid ideals then corresponds to that of \emph{monomial ideals} in $\Z[Q]$. Cf. \cite[\S1.4]{SwansonHuneke}.

\begin{defn}Let $Q^+$ be a monoid, written additively, and let $Q$ be an (additive) localisation of $Q^+$. Let us call $(Q;Q^+)$ a \emph{monoid pair}.\footnote{The results of this section are equally valid whether or not we consider monoids with absorbing elements, so for simplicity here I will not make this assumption.}

A \emph{fractional ideal} of $Q$ is a finitely generated $Q^+$-invariant subset.\end{defn}

Two properties of this situation immediately distinguish it from the setting of ordinary commutative algebra: first, the `universal valuation'
\[ Q\rightarrow \B[Q;Q^+] \]
into the fractional ideal semiring is \emph{injective}, and second, that in the absence of any other operations
\[ I\vee J = I \cup J \]
for fractional ideals $I,J\subseteq Q$.

\begin{defn}A $Q^+$-submodule $I\subseteq Q$ is said to be \emph{integrally closed} if $\B(I)$ is integrally closed in $\B[Q;Q^+]$. The set of integral closures of finitely generated $Q^+$-submodules is denoted $\B^\nu[Q;Q^+]$.\end{defn}

With this approach to integral closure, the following consequence is immediate:

\begin{prop}Let $(Q;Q^+)$ be a monoid pair. Then
\[ \B[Q;Q^+] \rightarrow \B^\nu[Q;Q^+] \]
is  normalisation of semirings.\end{prop}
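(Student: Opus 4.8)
The plan is to recognise the asserted map as a direct instance of the general normalisation construction of \S\ref{NORM}, applied to the semiring $\alpha := \B[Q;Q^+]$ regarded as a module over itself; essentially nothing beyond unwinding definitions should be needed, which is why I expect the result to be immediate.

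First I would recall, from the discussion following Proposition \ref{INT_NORMALISATION}, that the normalisation $\alpha^\nu$ is computed as the image of the composite
\[ \alpha \rightarrow \sh L(\alpha) \rightarrow \sh L^\nu(\alpha), \]
in which the first arrow sends an element to the principal order ideal it generates and the second is the integral closure operator of \S\ref{INT}. Thus $\alpha^\nu$ is, as a set, exactly the collection of integral closures of elements of $\alpha$, carrying the semiring structure it inherits as the normal image in $\alpha \oplus_\alpha \mathrm{PL}(\alpha)$, and the structure map $\alpha \rightarrow \alpha^\nu$ is by definition a normalisation of semirings.

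Next I would match this description with $\B^\nu[Q;Q^+]$. By the definition of the fractional ideal semiring, every element of $\alpha = \B[Q;Q^+]$ has the form $\B(I)$ for some finitely generated $Q^+$-submodule $I \subseteq Q$, and conversely each such $I$ produces one. Consequently the set of integral closures of elements of $\alpha$ is precisely the set of integral closures of classes of finitely generated $Q^+$-submodules---which is how $\B^\nu[Q;Q^+]$ is defined---and the elements fixed by the integral closure operator are exactly those $\B(I)$ with $I$ integrally closed in the sense of the preceding definition. This identifies the displayed map with the normalisation $\alpha \rightarrow \alpha^\nu$, yielding the claim.

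I do not anticipate a genuine obstacle: the whole content sits in the machinery of \S\ref{NORM}, and the proposition merely transports it across the above identifications. The one point meriting a moment's care is that the semiring operations placed on $\B^\nu[Q;Q^+]$ agree with those on $\alpha^\nu$; but since both are induced from the quotient $\sh L(\alpha) \rightarrow \sh L^\nu(\alpha)$---join of order ideals for $\vee$ and the normalised sum for $+$---they coincide by construction.
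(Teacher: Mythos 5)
Your proof is correct and is essentially the paper's own argument: the paper's proof simply refers back to Proposition \ref{ALG_COMP}, whose proof likewise identifies $\sh L(\B[Q;Q^+])$ with the lattice of all $Q^+$-submodules of $Q$ and $\B^\nu[Q;Q^+]$ with the set of integral closures of elements of $\B[Q;Q^+]$, i.e.\ with the image of the composite $\alpha\rightarrow\sh L(\alpha)\rightarrow\sh L^\nu(\alpha)$ that \S\ref{NORM} exhibits as the normalisation. Your observation that no reconciliation of definitions is needed here (unlike the scheme case, which required Lemma \ref{ALG_EXT}) matches the paper's remark that the consequence is ``immediate'' with this approach to integral closure.
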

\begin{proof}See the proof of proposition \ref{ALG_COMP}.\end{proof}

We can give an explicit description of $\B^\nu[Q]$ in the case that all elements of $Q$ are bounded in $\B[Q]$, that is, when $Q$ is a group. In this case, $\B^\nu[Q]$ is a bounded semiring, and so by proposition \ref{INT_NORMALISATION} normality is equivalent to cancellativity for all finite elements; that is, $\B^\nu[Q]$  is a universal \emph{cancellative quotient} of $\B[Q]$. 

As a preliminary reduction, let us deal with the `cyclotomic' part of $Q$. So suppose, first of all, that $Q$ is torsion. Then the integral closure of $0\in Q^+$ is all of $Q$. It follows that ${}^\nu\B[Q;Q^+]\cong\B$. More generally, $Q$ fits into a sequence
\[ 0\rightarrow Q^\mathrm{tors} \rightarrow Q \rightarrow Q^\mathrm{tf} \rightarrow 0 \]
with $Q^\mathrm{tf}\rightarrow Q\tens\Q$ injective. The induced map 
\[ \B^\nu[Q]\rightarrow \B^\nu[Q^\mathrm{tf}]\] 
is an isomorphism. So we may as well assume $Q$ is torsion-free.

We will treat $\B[Q]$ by embedding it in $\B[Q\tens\Q]$, the addition on which is nothing more than the \emph{Minkowski sum} of subsets of $Q\tens\Q$.

\begin{lemma}\label{MON_CONVEX}An integrally closed ideal of $\B[Q\tens\Q]$ is convex in $Q\tens\Q$.\end{lemma}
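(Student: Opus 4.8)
The plan is to prove convexity directly from the defining property of integral closure, exploiting that in the Minkowski semiring the sum $+$ is literally the Minkowski sum of subsets of $V:=Q\tens\Q$. Since $V$ is a $\Q$-vector space, convexity of a subset means closure under rational convex combinations, and by the usual induction it suffices to treat binary combinations. So let $I$ be an integrally closed element of $\B[Q\tens\Q]$ (an integrally closed fractional ideal, viewed as a subset of $V$), fix $x,y\in I$, pick a rational parameter $t=p/n\in[0,1]$ with $0\le p\le n$ integers, and set $z:=tx+(1-t)y\in V$. The goal is to show $z\in I$.

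The key computation is that clearing denominators turns the convex combination into a Minkowski relation:
\[ nz = px+(n-p)y. \]
Because $x,y\in I$ and $+$ is Minkowski sum, the right-hand side lies in the $n$-fold sum $nI$ — take $x$ from $p$ of the factors and $y$ from the remaining $n-p$. Passing to principal fractional ideals, and using that $nI$ is invariant under $Q^+$, this reads $n\langle z\rangle \le nI$ in $\B[Q\tens\Q]$. I would then feed this into the equation of integral dependence recorded after lemma \ref{SEMIRINGS_LEMMA}: setting $X:=\langle z\rangle\vee I$ and $Y:=I$, so that $Y\le X$, the bound $n\langle z\rangle\le nI$ supplies the top-degree term while the remaining terms are automatic from $\langle z\rangle\le X$, so that
\[ nX \le \bigvee_{i=0}^{n-1} iX+(n-i)I. \]
This is precisely an equation of integral dependence of $X$ over $I$ (equivalently, the divisibility input $n\langle z\rangle\le nI$), so $X=\langle z\rangle\vee I$ is an integral extension of $I$.

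Since $I$ is integrally closed, any ideal integral over it is contained in it (proposition \ref{IDEAL_PROPS}), whence $\langle z\rangle\vee I\le I$, i.e.\ $z\in I$; as $x,y$ and $t$ were arbitrary, $I$ is convex. The step that needs care — and which I expect to be the main obstacle — is the invocation of the reduction/integral-dependence mechanism, which (unlike its commutative-algebra shadow, cf.\ the remark after proposition \ref{ALG_PROP}) requires $X\vee Y=\langle z\rangle\vee I$ to be \emph{bounded} before one may cancel $(n-1)X$. This is exactly where the standing hypothesis of the section enters: $Q$ is a group, so every principal ideal is invertible and the integrally closed fractional ideals in play are bounded, legitimising the cancellation. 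A secondary point to keep honest is that integral closure over $\R$ would produce the full real convex hull, whereas here we work inside the $\Q$-vector space $Q\tens\Q$ and obtain exactly closure under rational convex combinations, which is the correct notion of convexity in this setting.
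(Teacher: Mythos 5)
Your proof is correct and is essentially the paper's own argument: the paper likewise clears denominators to get $n\langle z\rangle \leq nI$ (working directly with $n$-ary equal-weight combinations $nX=\sum_{i=1}^n Y_i$ rather than your binary-plus-induction reduction) and then concludes by divisibility \eqref{divisible} of lemma \ref{SEMIRINGS_LEMMA} for integrally closed elements, whose derivation via the reduction inequality is exactly the integral-dependence computation you spell out. Your closing caveat about boundedness of $\langle z\rangle\vee I$, resolved by $Q$ being a group so that finitely generated fractional ideals are bounded, is also the correct reading of the hypotheses of lemma \ref{SEMIRINGS_LEMMA}.
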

\begin{proof}Conversely, suppose $X\in\mathrm{Conv}(I)$, so
\[ nX = \sum_{i=1}^nY_i \in nI\]
for some $Y_i\in I$. If $I$ is integrally closed, $X\in I$ by divisibility (\eqref{divisible}, lemma \ref{SEMIRINGS_LEMMA}).\end{proof}

In particular, if $0$ is integrally closed in $\B[Q\tens\Q]$ , then $Q^+$ is \emph{saturated} in $Q$, that is,
\[ nX\in Q^+ \quad\Rightarrow\quad X\in Q^+ \]
for any $X\in Q$. This condition is cognate to that of relative normality in non-Archimedean rings, or of normality at zero in semirings.

Compare \cite[Prop. 1.4.6]{SwansonHuneke}.

\paragraph{Polytopes} Let $N$ be a rational affine space, $\Delta\subseteq N$ a bounded, rational polytope. Our monoid $Q$ will be the group of affine functions $N\rightarrow\Q$ with integer slopes. It is an extension of a finite rank free $\Z$-module by $\R$.

Our polytope $\Delta$ is cut out by inequalities $F_i\leq\lambda_i$ for some affine functions $F_i\in Q$ and constants $\lambda_i\in\R$. Let us denote by $\B[Q]$ the free semiring on the monoid $Q$. There is a natural surjective evaluation homomorphism
\[ \B[Q]/(F_i\leq\lambda_i)\stackrel{\mathrm{ev}}{\rightarrow}\CPA_\Z(\Delta,\R) \]
into the semiring of convex, piecewise-affine functions on $\Delta$ with integer slopes.

By rewriting the relation $F_i\leq\lambda_i$ as $\lambda_i-F_i\leq 0$, we can impose it at the level of $Q$ by defining the monoid of functions less than zero on $\Delta$:
\[ Q^+= \N (\lambda_i-F_i)+\R_{\leq0}. \]
It is saturated in $Q$ if and only if the relation $F_i\leq\lambda_i$ is \emph{primitive}, which since $\lambda\in\Q$ simply means that $F_i$ is indivisible.

Then $\B[Q]/(F_i\leq\lambda_i)\cong\B[Q;Q^+]$ is a fractional ideal semiring.

\begin{thm}\label{MON_THM}The evaluation map
\[ \mathrm{ev}:\B[Q;Q^+] \rightarrow\mathrm{CPA}_\Z(\Delta,\R)  \]
 is a normalisation of semirings.\end{thm}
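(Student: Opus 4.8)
The plan is to verify the three features that characterise $\mathrm{ev}$ as a normalisation: that the target is normal, that $\mathrm{ev}$ is surjective, and that its fibres coincide with those of the integral-closure operator on $\B[Q;Q^+]$. The first is immediate — a sum of convex functions is convex and pointwise addition is cancellative, so $\CPA_\Z(\Delta,\R)$ is cancellative, hence normal by corollary \ref{CANCELLATIVE}. Surjectivity is also clear, since every convex piecewise-affine function with integral slopes on the compact polytope $\Delta$ is a finite join $\bigvee_i\ell_i$ of affine functions $\ell_i\in Q$, and such a join is $\mathrm{ev}$ of the fractional ideal they generate. It follows that $\mathrm{ev}$ factors uniquely as $\B[Q;Q^+]\to\B^\nu[Q;Q^+]\xrightarrow{\overline{\mathrm{ev}}}\CPA_\Z(\Delta,\R)$, the first arrow being the normalisation and $\overline{\mathrm{ev}}$ a surjection. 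Everything thus reduces to the injectivity of $\overline{\mathrm{ev}}$.

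Injectivity says that an integrally closed fractional ideal $I$ is recovered from $g:=\mathrm{ev}(I)$. I would prove the sharper statement that for any fractional ideal $I$ with generators $\ell_1,\dots,\ell_m$,
\[ \overline I=\{\,\ell\in Q:\ell\le g\text{ on }\Delta\,\}. \]
The inclusion $\subseteq$ is formal: if $I\subseteq I'$ is an integral extension then $I'+S=I+S$ for some bounded $S$ (criterion v) of proposition \ref{EXT_PROPS}), and applying the homomorphism $\mathrm{ev}$ into the cancellative semiring $\CPA_\Z(\Delta,\R)$ gives $\mathrm{ev}(I')=\mathrm{ev}(I)=g$, so every element of $\overline I$ lies below $g$. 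The whole content is the reverse inclusion, which I single out as the crux $(\star)$: if $\ell\in Q$ satisfies $\ell\le g=\bigvee_i\ell_i$ on $\Delta$, then $\ell$ is integral over $I$.

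For $(\star)$ I would pass to $Q\tens\Q$ and use the convexity of integrally closed ideals. By the contraction property applied to $\B[Q;Q^+]\hookrightarrow\B[Q\tens\Q]$, together with lemma \ref{MON_CONVEX}, $\overline I$ consists of the integral-slope points of the integral closure of $I$ inside $\B[Q\tens\Q]$, which is convex and contains $\mathrm{Conv}(\{\ell_i\})+Q^+$. A separation argument in the space of affine functions identifies the real convex set $\{\ell:\ell\le g\text{ on }\Delta\}$ with $\mathrm{Conv}_\R(\{\ell_i\})+Q^+$: an affine $\ell$ dominated by $g$ on $\Delta$ cannot be separated from $\mathrm{Conv}(\{\ell_i\})+Q^+$, because a separating functional bounded on the cone $Q^+$ is a positive multiple of evaluation at a point of $\Delta$, and such an evaluation would contradict $\ell\le g$. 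Equivalently, by Sion's minimax theorem there are \emph{real} weights $t_i\ge0$, $\sum_i t_i=1$ with $\sum_i t_i\ell_i\ge\ell$ on $\Delta$.

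The remaining, and genuinely delicate, step is to upgrade this real domination to a \emph{rational} one, i.e.\ to $n\ell\le\sum_{s=1}^n\ell_{j_s}$ on $\Delta$ for some integer $n$ and generators $\ell_{j_s}$; this is precisely the equation of integral dependence that, via the reduction inequality \eqref{the_implication} and divisibility \eqref{divisible} of lemma \ref{SEMIRINGS_LEMMA} read for integral closures, witnesses $\ell\in\overline I$. The set of dominating weights $T=\{\,t\in\R_{\ge0}^m:\sum_i t_i=1,\ \sum_i t_i\ell_i(v)\ge\ell(v)\ \forall\,v\in\mathrm{vert}\,\Delta\,\}$ is nonempty by the previous paragraph, but is cut out by inequalities with \emph{irrational} constant terms, so it is not visibly a rational polytope. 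The observation that rescues rationality is that these constants cancel upon differencing across vertices: $\ell_i(v)-\ell_i(v')=a_i\cdot(v-v')$ and $\ell(v)-\ell(v')=a\cdot(v-v')$ with $a_i,a$ integral and $v-v'$ rational. I would use this to show every vertex of $T$ is rational: for a vertex $t^\dagger$ with support $W$ and active vertex-set $B$, the active equations reduce to $\sum_{i\in W}t_i=1$, the rational difference equations $\sum_{i\in W}t_i\,a_i\cdot(v-v_0)=a\cdot(v-v_0)$, and a single residual constant-matching equation; on the rational affine subspace defined by the first two families the vertex-domination constraints collapse to this one residual inequality, whose optimum over that rational polytope is attained at a rational vertex, yielding a rational point of $T$. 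This rational dominating combination gives $(\star)$, and hence the theorem. I expect this rationality upgrade — reconciling the real constants of $\R$-valued functions with the integral combinatorics of integral dependence — to be the main obstacle; the convex geometry and surrounding semiring formalism are comparatively routine given lemmas \ref{SEMIRINGS_LEMMA} and \ref{MON_CONVEX}.
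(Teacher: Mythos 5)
Your reductions are all correct: the target is cancellative hence normal, $\mathrm{ev}$ is surjective, the inclusion $\overline I\subseteq\{\ell\in Q:\ell\le g\text{ on }\Delta\}$ follows by cancelling a bounded $S$, and the minimax/separation step does produce \emph{real} weights $t_i\ge0$, $\sum_it_i=1$, with $\sum_it_i\ell_i\ge\ell$ on $\Delta$. You have also correctly isolated the crux: upgrading real weights to rational ones. But your argument for that step has a genuine gap. First, the claim that every vertex of $T$ (your polytope of dominating weight vectors) is rational is false: on $\Delta=[0,1]$ with $\ell=0$, $\ell_1=x-\sqrt2$, $\ell_2=-x+2$, one computes $T=\{(t_1,1-t_1):0\le t_1\le 2-\sqrt2\}$, whose endpoint is irrational. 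Second, the procedure you actually describe --- optimise the residual functional over the rational polytope cut out by the support, the sum, and the differenced equations over the active set $B$ --- is invalid, because the domination constraints at vertices $v\notin B$ do \emph{not} collapse on that subspace, and the optimum can violate them. This fails even with purely rational data: take $\Delta=[0,1]$, $\ell=0$, $\ell_1=-2x+1$, $\ell_2=3x-1$, so that $T=\{t_1\in[1/2,2/3]\}$; at the vertex $t^\dagger=(1/2,1/2)$ only the $v=0$ constraint is active, so your subspace is $L=\{t_1+t_2=1\}$ and the residual is $t_1-t_2$, whose maximum over $L\cap\Sigma$ is attained at $(1,0)$ --- but $(1,0)\notin T$, since $\ell_1(1)=-1<0$. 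The procedure thus outputs a weight vector that does not dominate $\ell$, and the proof of $(\star)$ is incomplete.

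The rationality upgrade is nonetheless true, and your differencing observation is exactly the right ingredient; it must only be applied globally rather than at an active set. Write each constraint as $\left[\sum_it_i\,a_i\cdot v-a\cdot v\right]+\left[\sum_it_ic_i-c\right]$ and note that the second bracket is independent of $v$. Decompose the simplex into the \emph{rational} polytopes $\Sigma_v=\{t\in\Sigma:\sum_it_i\,a_i\cdot v-a\cdot v\le\sum_it_i\,a_i\cdot w-a\cdot w\ \text{for all } w\in\mathrm{vert}\,\Delta\}$ on which $v$ minimises the rational bracket. On each $\Sigma_v$ the full objective $\psi(t)=\min_w\left[\sum_it_i\ell_i(w)-\ell(w)\right]$ is a single affine function, so its maximum over $\Sigma_v$ is attained at a vertex of $\Sigma_v$, which is rational. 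Your real feasible point lies in some $\Sigma_{v^*}$, so that rational vertex satisfies $\psi\ge0$; clearing denominators gives $n\ell\le\ell_{j_1}+\cdots+\ell_{j_n}$ on $\Delta$, hence $\ell\in\overline I$ by divisibility, as you intended. With this repair your proof is complete, and it is genuinely different from the paper's own argument, which instead deduces injectivity on integrally closed elements from convexity (lemma \ref{MON_CONVEX}) together with polar duality: an integrally closed ideal is recovered as the set of affine functions lying below the upper convex hull of the graph of its evaluation.
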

\begin{proof}The function semiring $\mathrm{CPA}_\Z(\Delta,\R)$ is cancellative and therefore a quotient of the normalisation.

Conversely, by lemma \ref{MON_CONVEX}, any integrally closed ideal of $\B[Q\tens\Q]$ is convex, and hence is determined uniquely by its polar set
\[ G^\diamond:=\{ (p,q)| Fp-q\leq0 \text{ for all }F\in G \} \subseteq N\times \Q  \]
which is none other than the \emph{upper convex hull} of the graph of $\mathrm{ev}G$.\footnote{This more usual definition of the polar lives in the dual vector space $Q^\vee\tens\Q$. One can pass between that and the setting above via the canonical embedding of $N$ in $Q^\vee\tens\Q$ as an affine hyperplane.}\end{proof}

So away from infinity, the difference between `tropical functions' and real functions is normalisation. This resolves the dichotomy of \cite[e.g.\ 6.5]{mac}.

\appendix
\section{Approximation property}\label{append}

\begin{defn}\label{APPEND_DEF}Let $\alpha$ be an idempotent semiring. Let us say that two order ideals $\iota_1,\iota_2\subseteq\alpha$ are \emph{equal away from infinity} if
\begin{align} \iota_1\vee T=&\iota_2\vee T\text{ for all bounded }T\in\alpha.\end{align}
In other words, $\iota_1=\iota_2$ away from infinity if they support the same bounded functions.

If any two ideals of $\alpha$ that are equal away from infinity are in fact exactly equal, we say that $\alpha$ has the \emph{bounded approximation property}.\end{defn}

\begin{eg}\label{APPEND_EGS}Any semiring in which all elements other than $-\infty$ are bounded has the bounded approximation property. In particular, this applies to the semiring of convex, piecewise-affine functions on any compact polytope.\end{eg}

Let $X,Y\in\alpha$. The equation $X\vee T=Y\vee T$ is equivalent to $X$ and $Y$ being equal in the quotient $\alpha/T=-\infty$. Indeed, $(-)\vee T$ is nothing but the composite of the projection together with its \emph{right adjoint}
\[ \alpha \stackrel{\rho}{\rightarrow} \alpha/(T=-\infty) \stackrel{\rho^\dagger}{\rightarrow}\alpha, \]
the second arrow being injective.

Thus, $X=Y$ away from infinity if and only if they have the same image in the `bounded formal completion', defined as follows: let
\[ \widehat\alpha^\circ:= \lim_{T\in\alpha^\mathrm{bdd}}\alpha^\circ/(T=-\infty) \]
be the limit indexed by the partially ordered set $\alpha^\mathrm{bdd}$ of bounded elements, and write \[\widehat\alpha:=\widehat\alpha^\circ\oplus_{\alpha^\circ}\alpha.\]

In particular, if $\alpha^\circ$ is $T$-adically complete with respect to some invertible $T\in\alpha$, then $\alpha$ has the bounded approximation property.

\bibliographystyle{alpha}
\bibliography{INT}
\end{document}